\DeclareMathOperator{\Hom}{Hom}
\DeclareMathOperator{\LMod}{LMod}
\DeclareMathOperator{\Perf}{Perf}
\DeclareMathOperator{\Tor}{Tor}
\DeclareMathOperator{\Map}{Map}
\DeclareMathOperator{\Ho}{Ho}
\DeclareMathOperator{\Ind}{Ind}
\DeclareMathOperator{\Idem}{Idem}
\DeclareMathOperator{\Spec}{Spec}
\begin{document}

\newcommand{\cat}[1]{\mathrm{#1}}
\newcommand{\sSet}{\cat{sSet}}

\newcommand{\Q}{\mathbf{Q}}
\newcommand{\C}{\mathbf{C}}
\newcommand{\N}{\mathbf{N}}
\newcommand{\Z}{\mathbf{Z}}
\newcommand{\R}{\mathbf{R}}

\newcommand{\cO}{\mathcal{O}}
\newcommand{\cA}{\mathcal{A}}
\newcommand{\cB}{\mathcal{B}}
\newcommand{\cG}{\mathcal{G}}
\newcommand{\cT}{\mathcal{T}}
\newcommand{\cF}{\mathcal{F}}
\newcommand{\cX}{\mathcal{X}}

\newcommand{\op}{\mathrm{op}}
\newcommand{\Cat}{\mathrm{Cat}}
\newcommand{\exact}{\mathrm{ex}}
\newcommand{\st}{\mathrm{st}}

\newcommand{\id}{\mathrm{id}}

\newcommand{\pr}{\mathrm{pr}}

\newcommand{\Cone}{\mathrm{Cone}}

\newcommand{\laxtimes}[1]{\mathop{\times\mkern-13mu\raise1.3ex\hbox{$\scriptscriptstyle\to$}_{#1}}}

\renewcommand{\labelenumi}{(\roman{enumi})}

\newtheorem{thm}{Theorem}
\newtheorem*{thm*}{Theorem}
\newtheorem{cor}[thm]{Corollary}
\newtheorem*{cor*}{Corollary}
\newtheorem{lemma}[thm]{Lemma}
\newtheorem{prop}[thm]{Proposition}

\theoremstyle{definition}
\newtheorem{dfn}[thm]{Definition}
\newtheorem*{dfn*}{Definition}

\theoremstyle{remark}

\newtheorem{claim}[thm]{Claim}

\newtheorem{rem}[thm]{Remark}
\newtheorem*{rem*}{Remark}
\newtheorem{rems}[thm]{Remarks}
\newtheorem*{ex*}{Example}
\newtheorem{ex}[thm]{Example}
\newtheorem{recollection}[thm]{Recollection}


\title{Excision in algebraic \textit{K}-theory revisited}

\author{Georg Tamme}
\email{georg.tamme@ur.de}
\address{Fakult\"at f\"ur Mathematik, Universit\"at Regensburg, 93040 Regensburg, Germany}

\thanks{The author is supported by the CRC 1085 \emph{Higher Invariants} (Universit\"at Regensburg) funded by the DFG}

 \begin{abstract}
By a theorem of Suslin, a Tor-unital (not necessarily unital) ring satisfies excision in algebraic $K$-theory. 
We give a new and direct proof of Suslin's result based on an exact sequence of categories of perfect modules.
In fact, we prove a more general descent result for a pullback square of ring spectra and any localizing invariant. 
Besides Suslin's result, this also contains Nisnevich descent of algebraic $K$-theory for affine schemes as a special case.
Moreover, the role of the Tor-unitality condition becomes very transparent.
   \end{abstract}

\maketitle

\section*{Introduction}

One of the main achievements in the algebraic $K$-theory of rings is the solution of the excision problem, first rationally by Suslin--Wodzicki \cite{SuslinWod} and later integrally by Suslin \cite{Suslin}: For a two-sided ideal  $I$ in a unital ring $A$ one defines the relative  $K$-theory spectrum $K(A,I)$ as the homotopy fibre of the map of $K$-theory spectra $K(A) \to K(A/I)$, so that its homotopy groups $K_{*}(A,I)$ fit in a long exact sequence
\[
\dots \to K_{i}(A,I) \to K_{i}(A) \to K_{i}(A/I) \to K_{i-1}(A,I) \to \dots
\]
If $I$ is a not necessarily unital ring, one defines $K_{*}(I) := K_{*}(\Z \ltimes I, I)$ where $\Z \ltimes I$ is the unitalization of $I$. 
For every unital ring $A$ containing $I$ as a two-sided ideal there is a  canonical map  $\Z \ltimes I \to A$. It induces a  map $K_{*}(I) \to K_{*}(A,I)$ and one says that $I$ satisfies excision in algebraic $K$-theory if this map is an isomorphism for all such $A$. 

Equivalently, $I$ satisfies excision in algebraic $K$-theory if, for every ring $A$ containing $I$ as a two-sided ideal and any ring homomorphism $A \to B$ sending $I$ isomorphically onto an ideal of $B$, the pullback square of rings 
\begin{equation} \label{diag:Milnor-square}
\begin{split}
\xymatrix@C-0.3cm@R-0.3cm{
A \ar[r] \ar[d]  & A'   \ar[d]   \\
B \ar[r] &  B'
}
\end{split}
\end{equation}
where $A' = A/I$, $B' = B/I$
induces a homotopy cartesian square of non-connective $K$-theory spectra
\begin{equation} \label{diag:K-spectra-square}
\begin{split}
\xymatrix@C-0.3cm@R-0.3cm{
K(A) \ar[r] \ar[d]  & K(A')   \ar[d]   \\
K(B) \ar[r] &  K(B').
}
\end{split}
\end{equation}

A ring $I$ is called \emph{Tor-unital} if $\Tor^{\Z \ltimes I}_{i}(\Z,\Z) = 0$ for all $i > 0$. 
Every unital ring is Tor-unital, since if $I$ is unital, then $\Z \ltimes I \cong \Z \times I$ and the projection to $\Z$ is flat. 

\begin{thm}[Suslin]\label{thm:thm1}
If $I$ is Tor-unital, then $I$ satisfies excision in algebraic $K$-theory.
\end{thm}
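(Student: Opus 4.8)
The plan is to derive Suslin's theorem from a general descent statement for pullback squares of ring spectra. By the reformulation of excision recalled above, it suffices to prove that the Milnor square \eqref{diag:Milnor-square} induces a homotopy cartesian square \eqref{diag:K-spectra-square} of non-connective $K$-theory spectra; since non-connective $K$-theory is a localizing invariant, this is a special case of the assertion that for every pullback square of $E_1$-ring spectra $A\simeq B\times_{B'}A'$ (with structure maps $A\to B$, $A\to A'$, $B\to B'$, $A'\to B'$) for which the canonical map $B\otimes_A A'\to B'$ is an equivalence of spectra --- equivalently $\Tor^A_i(B,A')=0$ for $i>0$ --- applying any localizing invariant $\mathcal{E}$ yields a cartesian square of spectra. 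The reduction then rests on one ring-theoretic input: that Tor-unitality of $I$ forces $\Tor^A_i(B,A')=0$ for $i>0$ in the Milnor square. I would obtain this from the standard bar-complex description of $\Tor^A_*(A',A')$, which shows it is independent of the unital ring $A\supseteq I$ (so that Tor-unitality of $I$ says precisely $A'\otimes_A A'\simeq A'$), and then tensor the cofibre sequence $I\to A\to A'$ of left $A$-modules with $B$ and compare with $I\to B\to B'$. This is the only point where Tor-unitality enters --- which is what makes its role transparent.

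To prove the ring-spectrum statement I would introduce an auxiliary $\infty$-category of modules: the lax pullback $\mathcal{M}:=\LMod_B\laxtimes{\LMod_{B'}}\LMod_{A'}$, whose objects are triples $(M,N,\varphi)$ with $M\in\LMod_B$, $N\in\LMod_{A'}$ and $\varphi\colon M\otimes_B B'\to N\otimes_{A'}B'$ an arbitrary (not necessarily invertible) map of $B'$-modules. This $\mathcal{M}$ is presentable and stable; the objects $(B,0,0)$ and $(0,A',0)$ are compact and generate it, since every object sits in a natural cofibre sequence $(0,N,0)\to(M,N,\varphi)\to(M,0,0)$. Base change along the square gives a colimit-preserving functor $\Phi\colon\LMod_A\to\mathcal{M}$, $M\mapsto(M\otimes_A B,\,M\otimes_A A',\,\mathrm{can})$; its value $\Phi(A)$ on the unit is compact (it lies in a cofibre sequence $(0,A',0)\to\Phi(A)\to(B,0,0)$), and a direct computation of mapping spectra in $\mathcal{M}$ gives $\mathrm{End}_\mathcal{M}(\Phi(A))\simeq B\times_{B'}A'\simeq A$, where the pullback hypothesis is used. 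Hence $\Phi$ is fully faithful and restricts to a fully faithful exact functor $\Phi\colon\Perf(A)\hookrightarrow\mathcal{M}^\omega$ on compact objects.

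Now combine two Verdier sequences of small stable categories. The first,
\[\Perf(B)\xrightarrow{M\mapsto(M,0,0)}\mathcal{M}^\omega\xrightarrow{(M,N,\varphi)\mapsto N}\Perf(A'),\]
is split by the exact functor $N\mapsto(0,N,0)$, so $\mathcal{E}(\mathcal{M}^\omega)\simeq\mathcal{E}(B)\oplus\mathcal{E}(A')$ for any localizing invariant $\mathcal{E}$. The second is $\Perf(A)\xrightarrow{\Phi}\mathcal{M}^\omega\to\mathcal{Q}$, with $\mathcal{Q}$ the Verdier quotient; since $\mathcal{Q}$ is generated by the single compact object given by the image of $(B,0,0)$, its idempotent completion is $\Perf(C)$ for an $E_1$-ring $C$ that one reads off from the construction, and the hypothesis $B\otimes_A A'\simeq B'$ forces $C\simeq B'$, i.e.\ $\mathcal{E}(\mathcal{Q})\simeq\mathcal{E}(B')$. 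Applying $\mathcal{E}$ to the second sequence yields a fibre sequence $\mathcal{E}(A)\to\mathcal{E}(\mathcal{M}^\omega)\to\mathcal{E}(B')$; feeding in the splitting of the first sequence and tracking the cofibre sequence $(0,A',0)\to\Phi(A)\to(B,0,0)$ (which supplies the sign in the boundary map) identifies the two maps $\mathcal{E}(B)\to\mathcal{E}(B')$ and $\mathcal{E}(A')\to\mathcal{E}(B')$ with the base-change maps and the map $\mathcal{E}(A)\to\mathcal{E}(B)\oplus\mathcal{E}(A')$ with the one induced by the square. Thus $\mathcal{E}(A)\simeq\mathcal{E}(B)\times_{\mathcal{E}(B')}\mathcal{E}(A')$ and the square becomes cartesian, as required.

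The step I expect to be the main obstacle is the identification of the Verdier quotient $\mathcal{Q}=\mathcal{M}^\omega/\Phi(\Perf(A))$: establishing that it is perfect modules over a natural $E_1$-ring and that the Tor-vanishing hypothesis collapses this ring to $B'$ --- this is exactly where the flatness-type content of Tor-unitality is consumed. Secondary but still non-formal are the mapping-spectrum computations in $\mathcal{M}$ needed for compact generation and for $\mathrm{End}_\mathcal{M}(\Phi(A))\simeq A$, the verification that $\Phi$ preserves compact objects, and the bookkeeping of the connecting maps in the final step that recognizes the resulting cartesian square as \eqref{diag:K-spectra-square}.
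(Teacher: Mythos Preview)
Your strategy is essentially the paper's: embed $\Perf(A)$ fully faithfully in the lax pullback, use the split semi-orthogonal decomposition of the lax pullback to identify $E(\mathcal{M}^\omega)\simeq E(B)\oplus E(A')$, and then identify the Verdier quotient by $\Perf(A)$ with $\Perf(B')$. Your endomorphism computation $\mathrm{End}_{\mathcal M}(\Phi(A))\simeq B\times_{B'}A'\simeq A$ is correct and is a pleasant shortcut compared with the paper, which instead proves full derived Milnor patching $\LMod(A)\simeq\LMod(B)\times_{\LMod(B')}\LMod(A')$ (its Theorem~\ref{thm:pullback-modules-tor-unital-case}) and reads off fully faithfulness from that.

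The one real issue is your orientation of the lax pullback. With $\varphi\colon M\otimes_B B'\to N\otimes_{A'}B'$, the cone functor $\pi\colon\mathcal M\to\LMod(B')$ is a localization precisely when the restriction along $A'\to B'$ is fully faithful, i.e.\ when $A'\to B'$ is Tor-unital; this is \emph{not} implied by the hypotheses (and fails in typical Milnor squares). So the step you rightly flag as the obstacle---identifying $\mathcal Q$ with $\Perf(B')$---is genuinely blocked in your setup. The paper avoids this by taking the opposite orientation, $\psi\colon N\otimes_{A'}B'\to M\otimes_B B'$, so that the relevant localization condition becomes Tor-unitality of $B\to B'$, which follows from that of $A\to A'$ (the paper's Lemma~\ref{lem:stability-of-Tor-unitality}, essentially your bar-complex/cofibre argument). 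With that orientation, $\pi$ has a fully faithful right adjoint (Proposition~\ref{prop:exact-sequence-of-presentable-cats}), its kernel is the strict pullback, and derived Milnor patching identifies that kernel with $\LMod(A)$; Thomason--Neeman then gives the exact sequence on compact objects. In short: reverse the arrow in your lax pullback and the gap closes by the paper's route; everything else in your outline is correct.
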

In fact,  both  statements are equivalent \cite[Thm.~A]{Suslin}.
For $\Q$-algebras, this was proven before by Suslin and Wodzicki \cite[Thm.~A]{SuslinWod}. Wodzicki \cite{Wodzicki}  gives many  examples of  Tor-unital $\Q$-algebras, for instance all $C^{*}$-algebras. These results are the main ingredients in the proof of Karoubi's conjecture about algebraic and topological $K$-theory of stable $C^{*}$-algebras in \cite[Thm.~10.9]{SuslinWod}.
On the other hand, by work of Morrow \cite{Morrow} ideals $I$  in commutative noetherian rings are pro-Tor-unital in the sense that the pro-groups $\{ \Tor_{i}^{\Z \ltimes I^{n}}(\Z,\Z)\}_{n}$ vanish for all $i > 0$.

Suslin's proof of Theorem~\ref{thm:thm1}
uses the description of algebraic $K$-theory in terms of Quillen's plus-construction
and relies on a careful study of the homology of affine groups.
By completely different methods we prove the following generalization of Theorem~\ref{thm:thm1}: 
\begin{thm}\label{thm:thm2}
Assume that \eqref{diag:Milnor-square} is a homotopy pullback square of  ring spectra such that the multiplication map  $A' \otimes_{A} A' \to A'$ is an equivalence. Then the square \eqref{diag:K-spectra-square} of non-connective $K$-theory spectra is homotopy cartesian.
 \end{thm}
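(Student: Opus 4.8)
Since non-connective $K$-theory is a localizing invariant in the sense of Blumberg--Gepner--Tabuada, it is enough to prove that for every localizing invariant $E$ (a functor from small stable $\infty$-categories to spectra that sends exact sequences to fibre sequences) the square obtained by applying $E$ to the categories of perfect modules is homotopy cartesian. The plan is to factor the comparison map through an auxiliary category: the \emph{lax pullback} $\mathcal{D} := \Perf(B)\laxtimes{\Perf(B')}\Perf(A')$, whose objects are triples $(M,N,\varphi)$ with $M\in\Perf(B)$, $N\in\Perf(A')$ and $\varphi\colon M\otimes_B B'\to N\otimes_{A'}B'$ a map of perfect $B'$-modules. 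Taking $\Ind$-completions identifies $\mathcal{D}$ with $\Perf(C)$ for an $E_1$-ring spectrum $C$, and the canonical exact functor $\Theta\colon\Perf(A)\to\mathcal{D}$, $P\mapsto(P\otimes_A B,\,P\otimes_A A',\,\mathrm{can})$, factors through the full subcategory $\mathcal{D}_0\subset\mathcal{D}$ of triples with $\varphi$ invertible, which is the honest pullback $\Perf(B)\times_{\Perf(B')}\Perf(A')$.

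The first step, valid for \emph{any} homotopy pullback square of ring spectra, computes $E(\mathcal{D}_0)$. It rests on two exact sequences of perfect module categories. The projection $(M,N,\varphi)\mapsto M$ exhibits $\mathcal{D}$ as a split extension of $\Perf(B)$ by $\Perf(A')$ (the latter embedded as the triples $(0,N,0)$), so $E(\mathcal{D})\simeq E(B)\oplus E(A')$; and the functor $\mathcal{D}\to\Perf(B')$, $(M,N,\varphi)\mapsto\Cone(\varphi)$, is a Verdier quotient with kernel exactly $\mathcal{D}_0$, so $E(\mathcal{D}_0)\simeq\fib\bigl(E(\mathcal{D})\to E(B')\bigr)$. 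Tracing the first splitting through the cone functor, the map $E(\mathcal{D})\to E(B')$ becomes $(g,-f)\colon E(A')\oplus E(B)\to E(B')$, where $f\colon E(B)\to E(B')$ and $g\colon E(A')\to E(B')$ are the maps of the square and the sign comes from $E$ of the suspension functor being $-\id$; hence $E(\mathcal{D}_0)\simeq E(B)\times_{E(B')}E(A')$, naturally in the square. In short, the honest pullback of perfect module categories already computes the pullback of $E$'s.

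The second step invokes the hypothesis to compare $\Perf(A)$ with $\mathcal{D}_0$ via $\Theta$. A mapping-spectrum computation in $\mathcal{D}_0$ gives $\Map_{\mathcal{D}_0}(\Theta A,\Theta A)\simeq B\times_{B'}A'$, which is $\simeq A$ precisely because \eqref{diag:Milnor-square} is a homotopy pullback; by exactness this propagates to all of $\Perf(A)$, so $\Theta$ is fully faithful. On $\Ind$-completions, $\widehat\Theta\colon\mathrm{Mod}_A\to\mathrm{Mod}_B\times_{\mathrm{Mod}_{B'}}\mathrm{Mod}_{A'}$ is thus a fully faithful, colimit-preserving functor whose right adjoint is the forgetful functor $\rho\colon(M,N,\varphi)\mapsto M\times_{N\otimes_{A'}B'}N$, and $\widehat\Theta$ is an equivalence if and only if $\rho$ is conservative. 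This is where $A'\otimes_A A'\xrightarrow{\ \sim\ }A'$ is used: it forces $A'\otimes_A I\simeq 0$ for $I=\fib(A\to A')$ and, by a short diagram chase on the pullback square, $B\otimes_A A'\xrightarrow{\ \sim\ }B'$; then, given $\rho(M,N,\varphi)\simeq 0$, i.e.\ $M\oplus N\xrightarrow{\ \sim\ }N\otimes_{A'}B'$, applying $-\otimes_A A'$ and using these identities turns the equivalence into $(M\otimes_B B')\oplus N\xrightarrow{\ \sim\ }N\otimes_{A'}B'$ whose first component is the \emph{invertible} map $\varphi$, which forces $N\simeq 0$ and then $M\simeq 0$. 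Hence $\widehat\Theta$ is an equivalence, so $\Theta$ identifies $\Perf(A)$ with a dense subcategory of $\mathcal{D}_0$; since a localizing invariant does not distinguish a stable category from its idempotent completion, $E(A)\xrightarrow{\ \sim\ }E(\mathcal{D}_0)\xrightarrow{\ \sim\ }E(B)\times_{E(B')}E(A')$, and unravelling the construction shows this is the map induced by the square. (In the Tor-unital case the two exact sequences of the first step become $\Perf(A)\to\Perf(C)\to\Perf(B')$ and the split sequence $\Perf(A')\to\Perf(C)\to\Perf(B)$; this is the ``exact sequence of categories of perfect modules'' underlying the proof.)

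\textbf{The main obstacle} is the second of the two exact sequences in the first step: showing that $(M,N,\varphi)\mapsto\Cone(\varphi)$ really is a Verdier localization of $\mathcal{D}$ with kernel $\mathcal{D}_0$. Unlike the first, split sequence --- which is essentially a recollement of a triangular matrix ring --- this one requires a careful analysis of mapping spectra in the lax pullback (equivalently, of the adjoints available after $\Ind$-completion). Once it, and the elementary chain $A'\otimes_A A'\simeq A'\Rightarrow A'\otimes_A I\simeq 0\Rightarrow B\otimes_A A'\simeq B'$, are established, the remaining steps are formal.
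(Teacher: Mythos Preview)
Your overall architecture matches the paper's: form the lax pullback, use the split exact sequence to identify its $E$-value with $E(A')\oplus E(B)$, use the cone functor to cut out the honest pullback $\mathcal{D}_0$, and use derived Milnor patching to identify $\Perf(A)$ with $\mathcal{D}_0$ up to retracts. But there is a real gap in your first step, and it is exactly the ``main obstacle'' you flag. The sequence $\mathcal{D}_0 \to \mathcal{D} \xrightarrow{\pi} \Perf(B')$ is \emph{not} exact for an arbitrary pullback square. On Ind-completions, the right adjoint of $\pi$ is $r\circ v$, where $r\colon \LMod(A')\hookrightarrow \Ind(\mathcal{D})$ is $N\mapsto(0,N,0\to N\otimes_{A'}B')$ and $v\colon \LMod(B')\to\LMod(A')$ is restriction along $A'\to B'$; this composite is fully faithful precisely when $v$ is, i.e., precisely when $A'\to B'$ is Tor-unital. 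That is \emph{not} implied by Tor-unitality of $A\to A'$: in a classical Milnor square with $A=\Z\ltimes I$, $A'=\Z$, the map $A'\to B'=\Z\to B/I$ has no reason to be Tor-unital. Since $\Ind$ preserves Verdier quotients and ignores idempotent completion, the Perf-level sequence cannot be exact when the Ind-level one is not.

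The fix is to reverse the orientation of the lax pullback: take $\mathcal{D}=\Perf(A')\laxtimes{\Perf(B')}\Perf(B)$, with structure map $g\colon M\otimes_{A'}B'\to N\otimes_{B}B'$. Then the localization condition on $\pi$ becomes Tor-unitality of $B\to B'$, and this \emph{is} implied by the hypothesis via the chain you already have in step~2: $A'\otimes_A I\simeq 0$ gives $A'\otimes_A B\xrightarrow{\sim} A'\otimes_A B'\simeq B'$, hence $B'\otimes_B B'\simeq (A'\otimes_A B)\otimes_B B'\simeq A'\otimes_A B'\simeq B'$. With this orientation your two steps collapse into the paper's argument: derived Milnor patching (your step~2, the paper's Theorem~\ref{thm:pullback-modules-tor-unital-case}) identifies $\LMod(A)$ with the honest pullback, Tor-unitality of $B\to B'$ makes $\pi$ a localization on the Ind-level (the paper's Corollary~\ref{cor:exact-sequence-of-presentable-cats}), and Thomason--Neeman passes the resulting exact sequence to compact objects. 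So the proposed division of labour---step~1 ``for any pullback square'', step~2 under Tor-unitality---is not available; both steps need the hypothesis, and after the reorientation the proof is the paper's.
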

Here the tensor denotes the derived tensor product, and $K$-theory is the non-connective $K$-theory of perfect modules.

\begin{ex}
	\label{ex:Suslin-Nisnevich}
Assume that  \eqref{diag:Milnor-square} is a diagram of discrete rings. When viewed as a diagram of ring spectra,
 this  is a homotopy pullback square if and only if the induced sequence of abelian groups
\[
0 \to A \to A' \oplus B \to B' \to 0
\]
is exact. The  multiplication map $A' \otimes_{A} A' \to A'$ is an equivalence if and only if  $\Tor^{A}_{i}(A', A') = 0$ for all $i > 0$ and the ordinary tensor product of $A'$ with itself over $A$ is isomorphic to $A'$ via the multiplication.

There are two basic cases where both conditions are satisfied: The first one is  that $A' = A/I$ for a Tor-unital two-sided ideal $I$ in $A$ (see Example~\ref{ex:classical-Milnor}). This gives Suslin's result. 
The second one is that \eqref{diag:Milnor-square} is an elementary affine Nisnevich square, i.e.,
 all rings are commutative, $A' = A[f^{-1}]$ is a localization of $A$,  $A \to B$ is an \'etale map inducing an isomorphism $A/(f) \cong B/(f)$, and $B' = B[f^{-1}]$ (see Example~\ref{ex:Nisnevich}). 
Note that by \cite[Prop.~2.3.2]{AHW} the family of coverings of the form $\{\Spec(A[f^{-1}]) \to \Spec(A), \Spec(B) \to \Spec(A) \}$
generate the Nisnevich topology on the category of affine schemes (of finite presentation over some base).
 Thus Theorem~\ref{thm:thm2} also implies Nisnevich descent for the algebraic $K$-theory of affine schemes.
\end{ex}

In general, the condition that $A' \otimes_{A} A' \to A'$ be an equivalence is equivalent to $\LMod(A) \to \LMod(A')$  being a localization, where $\LMod$ denotes the $\infty$-category of left modules in spectra.
In particular, under this condition $\LMod(A')$ is a Verdier quotient of $\LMod(A)$.
The usual method that is used, for example,  to produce localization sequences in $K$-theory (see \cite[\S3]{Schlichting} for an overview, \cite[Thm.~0.5]{Neeman-Ranicki} for the case of a non-commutative localization where a similar condition on Tor-groups appears), 
would be to apply Neeman's generalization of Thomason's localization  theorem \cite[Thm.~2.1]{Neeman-connection} in order to deduce that  also 
the induced functor on the subcategories of compact objects, which are precisely the perfect modules, $\Perf(A) \to \Perf(A')$ is a Verdier quotient.
 However, Neeman's theorem does not apply here, since the kernel of $\LMod(A) \to \LMod(A')$ need not be compactly generated. Indeed, there is an example by Keller \cite[\S2]{Keller} of a ring map $A \to A'$ satisfying the hypotheses of Theorem~\ref{thm:thm2}, where this kernel has no non-zero compact objects at all and $\Perf(A')$ is not a Verdier quotient of $\Perf(A)$. 
 
Instead, under the conditions of Theorem~\ref{thm:thm2} we  prove a derived version of Milnor patching (Theorem~\ref{thm:pullback-modules-tor-unital-case}) saying that \eqref{diag:Milnor-square} induces a pullback diagram of $\infty$-categories of left modules, i.e.,
\[
\LMod(A) \simeq \LMod(A') \times_{\LMod(B')} \LMod(B).
\]
Its proof is inspired by a similar patching result for connective modules over connective ring spectra due to Lurie \cite[Thm.~16.2.0.2]{sag}.
We use this to show that $\LMod(A)$ can be embedded as a full subcategory  in the lax pullback $\LMod(A') \laxtimes{\LMod(B')} \LMod(B)$ (see Section~\ref{section1}) and to identify the Verdier quotient with $\LMod(B')$.
Now  the Thomason--Neeman theorem applies  and gives
  an exact sequence of small stable $\infty$-categories 
\begin{equation}
	\label{eq:exact-sequence-perfect-modules}
\Perf(A) \xrightarrow{i} \Perf(A') \laxtimes{\Perf(B')} \Perf(B)  \xrightarrow{\pi} \Perf(B'),
\end{equation}
i.e., the composite $\pi\circ i$ is zero and the induced functor from the Verdier quotient of the middle term by $\Perf(A)$ to
$\Perf(B')$ is an equivalence up to idempotent completion. 
This
 implies the assertion of Theorem~\ref{thm:thm2} not only for algebraic $K$-theory, but for any  invariant  which can be defined for small stable $\infty$-categories and which sends exact sequences of such to fibre sequences. 
In fact, in Section~\ref{section1}  we prove the existence of the analog of the exact sequence~\eqref{eq:exact-sequence-perfect-modules} for any so-called excisive square of small stable $\infty$-categories (Theorem~\ref{thm:main-theorem-new}). In Section~\ref{section2} we then prove that any square of ring spectra satisfying the hypotheses of Theorem~\ref{thm:thm2} yields an excisive square of $\infty$-categories of perfect modules (see Theorem~\ref{thm:excisive-square-ring-spectra}). These are the two main results of the paper.

\begin{rem}
The failure of excision in $K$-theory is measured in (topological) cyclic homology: Corti{\~n}as \cite{Cortinas-Obstruction} proved
that the fibre of the rational Goodwillie--Jones Chern character from rational algebraic $K$-theory to negative cyclic homology satisfies excision, i.e., sends the pullback square of rings \eqref{diag:Milnor-square}  with $B \to B'$ surjective to a homotopy pullback square of spectra without any further condition. Geisser and Hesselholt \cite{GeisserHesselholt} proved the analogous result with finite coefficients, replacing the Goodwillie--Jones Chern character by the cyclotomic trace map from $K$-theory to topological cyclic homology.
Both use  pro versions of the results of Suslin and Wodzicki.
Building on these results, Dundas and Kittang \cite{Dundas-Kittang-1, Dundas-Kittang-2} prove that the fibre of the cyclotomic trace satisfies excision also for connective ring spectra, and with integral coefficients (under the technical assumption that both, $\pi_{0}(B) \to \pi_{0}(B')$ and $\pi_{0}(A') \to \pi_{0}(B')$ are surjective).

In this general situation, i.e., without assuming any Tor-unitality condition, one still has the sequence \eqref{eq:exact-sequence-perfect-modules}, but the induced functor $f$ from the Verdier quotient to $\Perf(B')$ need not be an equivalence up to idempotent completion.
It would therefore be interesting to find conditions on an invariant $E$ that guarantee that $E(f)$ is still an equivalence.\footnote{We give a sufficient condition in a forthcoming article with Markus Land:  it suffices that the natural map $E(C) \to E(\pi_{0}(C))$ is an equivalence for any connective $E_{1}$-ring spectrum $C$.}
From the results mentioned above we know that $E(f)$ is an equivalence for $E$ the fibre of the cyclotomic trace.

\end{rem}
  
 We use $\infty$-categorical language. More concretely, we use the model of quasi-categories, which are the fibrant objects for the Joyal model structure on simplicial sets, as developed by Joyal \cite{Joyal} and Lurie in his books \cite{htt,halg,sag}.
  
  \smallskip
  
\noindent\textit{Acknowledgements.}
I  would like to express my sincere gratitude to the referee for the efforts taken to improve both the exposition and the results of this paper.
The referee gave a hint which led to a simplification of the proof of the main result of the first version of this paper, and also suggested to formulate the general categorical Theorem~\ref{thm:excisive-square} in terms of excisive squares and to deduce the excision result via Theorem~\ref{thm:excisive-square-ring-spectra}.
I would also like to thank Justin Noel and Daniel Sch{\"a}ppi for discussions about (lax) pullbacks of $\infty$-categories.%

\section{Pullbacks and exact sequences of stable $\infty$-categories}
\label{section1}

In this section, we discuss the pullback and the lax pullback of a diagram $A \to C \leftarrow B$ of $\infty$-categories. In the stable case, we relate these by exact sequences.
We further prove our first main result (Theorem~\ref{thm:excisive-square}) saying that any excisive square of small stable $\infty$-categories (see Definition~\ref{dfn:excisive-square}) yields a pullback square upon applying any localizing invariant.

Let $I = \Delta[1] \in \sSet$ be the standard simplicial 1-simplex. For any $\infty$-category $C$, we denote by $C^{I} = \mathrm{Fun}(I,C)$ the arrow category of $C$. The inclusion $\{0,1\} \subseteq I$ induces the source and target maps $s,t\colon C^{I} \to C$.

Consider a diagram of $\infty$-categories
\begin{equation}\label{diag:diag1}\begin{split}
\xymatrix{
& B \ar[d]^{q}\\
A \ar[r]^{p} & C.
}
\end{split}
\end{equation}
\begin{dfn}
The \emph{lax pullback}  $A \laxtimes{C} B$ of \eqref{diag:diag1} is defined 
via the pullback diagram
\begin{equation}\label{diag:diag-lax-pullback} 
\begin{split}
\xymatrix{
A \laxtimes{C} B \ar[d]_{(\pr_{1},\pr_{2})} \ar[r]^{\pr_{3}} & C^{I} \ar[d]^{(s,t)} \\
A\times B \ar[r]^{p \times q} & C\times C
}
\end{split}
\end{equation}
in simplicial sets.
\end{dfn}
By \cite[Ch.~5, Thm.~A]{Joyal} the map  $C^{I} \xrightarrow{(s,t)} C \times C$ is a categorical fibration, i.e., a fibration in the Joyal model structure. Since the lower and upper right corners in \eqref{diag:diag-lax-pullback} are $\infty$-categories, this implies that $A \laxtimes{C} B$ is indeed an $\infty$-category, and that \eqref{diag:diag-lax-pullback} is homotopy cartesian with respect to the Joyal model structure.

\begin{rem}\label{rem:mapping-spaces}
The objects of $A \laxtimes{C} B$ are triples of the form $(a,b, g \colon p(a) \to q(b))$ where $a$, $b$ are objects of $A$, $B$ respectively and $g$ is a morphism $p(a) \to q(b)$ in $C$.
If $(a,b,g)$ and $(a',b',g')$ are two objects of $A \laxtimes{C} B$, the mapping space between these sits in a homotopy cartesian diagram of spaces
\[
\xymatrix{
\Map( (a,b,g), (a',b',g') ) \ar[r] \ar[d]   &   \Map_{C^{I}}(g,g') \ar[d]   \\
\Map_{A}(a,a') \times \Map_{B}(b,b')   \ar[r]    &    \Map_{C}( p(a), p(a') ) \times \Map_{C}( q(b), q(b') ).
}
\] 
Indeed, using Lurie's $\Hom^{\mathrm{R}}$-model for the mapping spaces \cite[\S 1.2.2]{htt} gives a cartesian diagram of simplicial sets in which the right vertical map is a Kan fibration by \cite[Lemma 2.4.4.1]{htt}.
\end{rem}

\begin{rem}
Denote by $C^{(I)} \subseteq C^{I}$ the full subcategory spanned by the equivalences in $C$. It follows from \cite[Prop.~5.17]{Joyal} that the pullback of the diagram
\[
\xymatrix{
  & C^{(I)} \ar[d]^{(s,t)}  \\
  A \times B \ar[r]^{p\times q}  & C \times C
}
\]
in simplicial sets models the homotopy pullback of $\infty$-categories $A \times_{C} B$. In particular, we can identify $A \times_{C} B$ with the full subcategory of $A \laxtimes{C} B$ spanned by those objects $(a,b,g)$ where $g$ is an equivalence in $C$.
\end{rem}

\begin{lemma}\phantomsection
	\label{lem:stability-of-pullbacks}
\begin{enumerate}

\item Let $K$ be a simplicial set and $\delta\colon K \to A \laxtimes{C} B$  a diagram. If the compositions of $\delta$ with the projections to $A$ and $B$ admit colimits and these colimits are preserved by $p$ and $q$ respectively, then $\delta$ admits a colimit, which is preserved by the projections to $A$ and $B$.
The same statement holds for diagrams in $A \times_{C} B$.

\item If $A$ and $B$ are idempotent complete, then $A \laxtimes{C} B$ and $A \times_{C} B$ are idempotent complete.

\item If $A$, $B$, and $C$ are presentable and $p$ and $q$ commute with colimits, then both $\infty$-categories $A \laxtimes{C} B$ and $A \times_{C} B$ are presentable. Moreover, a functor from a presentable $\infty$-category $D$ to $A \times_{C} B$ or $A \laxtimes{C} B$ preserves colimits if and only if the  compositions with the projections to $A$ and $B$ do. 

\item If $A$, $B$, and $C$ are stable, and $p$ and $q$ are exact, then both $\infty$-categories $A \laxtimes{C} B$ and $A \times_{C} B$ are stable.
\end{enumerate}
\end{lemma}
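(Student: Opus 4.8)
The plan is to prove part~(i) first and to derive (ii)--(iv) from it by formal arguments, working throughout with $A\laxtimes{C}B$ and using that $A\times_{C}B$ is a full subcategory of it. The two facts I will lean on are that colimits in $C^{I}=\mathrm{Fun}(I,C)$ are computed pointwise \cite[\S 5.1.2]{htt} and that $(s,t)\colon C^{I}\to C\times C$ is a categorical fibration, so that \eqref{diag:diag-lax-pullback} is a homotopy pullback of $\infty$-categories.

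For part~(i), let $\delta\colon K\to A\laxtimes{C}B$ have components $\delta_{A}=\pr_{1}\delta$, $\delta_{B}=\pr_{2}\delta$, $\delta_{I}=\pr_{3}\delta\colon K\to C^{I}$, so that $s\circ\delta_{I}=p\circ\delta_{A}$ and $t\circ\delta_{I}=q\circ\delta_{B}$ by the strict pullback \eqref{diag:diag-lax-pullback}. I would choose colimit cones $\bar\delta_{A}\colon K^{\triangleright}\to A$ and $\bar\delta_{B}\colon K^{\triangleright}\to B$ extending $\delta_{A},\delta_{B}$; by hypothesis $p\bar\delta_{A}$ and $q\bar\delta_{B}$ are again colimit cones. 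Hence both evaluation diagrams of $\delta_{I}$ admit colimits, so $\delta_{I}$ admits a pointwise colimit cone $\bar\delta_{I}\colon K^{\triangleright}\to C^{I}$, and $(s,t)\bar\delta_{I}$ is a colimit cone for $(p\delta_{A},q\delta_{B})$; since $(p\bar\delta_{A},q\bar\delta_{B})$ is another such, the two agree, compatibly with their common restriction to $K$, up to equivalence. As $\mathrm{Fun}(K^{\triangleright},-)$ sends homotopy pullbacks to homotopy pullbacks, this equivalence lets me assemble $\bar\delta_{A},\bar\delta_{B},\bar\delta_{I}$ into a cone $\bar\delta\colon K^{\triangleright}\to A\laxtimes{C}B$ extending $\delta$ with $\pr_{1}\bar\delta\simeq\bar\delta_{A}$, $\pr_{2}\bar\delta\simeq\bar\delta_{B}$. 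To see $\bar\delta$ is a colimit cone I would check that for every object $(a',b',g')$ the canonical map $\Map(\bar\delta(\infty),(a',b',g'))\to\lim_{K^{\op}}\Map(\delta(-),(a',b',g'))$ is an equivalence: by Remark~\ref{rem:mapping-spaces} and the fact that $K^{\op}$-indexed limits of spaces commute with homotopy pullbacks, both sides are homotopy pullbacks of the mapping spaces in $A$, $B$, $C^{I}$ over those in $C$, and on each factor the comparison map is an equivalence because $\bar\delta_{A}$, $\bar\delta_{B}$, $\bar\delta_{I}$, $p\bar\delta_{A}$, $q\bar\delta_{B}$ are colimit cones. By construction the projections to $A$ and $B$ send $\bar\delta$ to colimit cones. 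Finally, if $\delta$ takes values in $A\times_{C}B$, i.e.\ each $\delta_{I}(k)$ is an equivalence, then $s\delta_{I}\Rightarrow t\delta_{I}$ is a pointwise, hence an actual, equivalence of $K$-diagrams, so the arrow $\bar\delta_{I}(\infty)$, being the induced map $\colim(s\delta_{I})\to\colim(t\delta_{I})$, is an equivalence; thus $\bar\delta$ lands in the full subcategory $A\times_{C}B$ and is a colimit cone there too.

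Given (i), the rest is formal. For (ii): every functor preserves splittings of idempotents (cf.\ \cite[\S 4.4.5]{htt}), so $p$ and $q$ preserve $\Idem$-indexed colimits, which exist in $A$ and $B$ by hypothesis; by (i) every idempotent diagram in $A\laxtimes{C}B$, resp.\ $A\times_{C}B$, then admits a colimit, i.e.\ splits. For (iii): $C^{I}=\mathrm{Fun}(I,C)$ is presentable \cite[Prop.~5.5.3.6]{htt} and $(s,t)$ preserves colimits (evaluations preserve pointwise colimits), so \eqref{diag:diag-lax-pullback} presents $A\laxtimes{C}B$ as a limit, computed in $\widehat{\Cat}_{\infty}$, of a diagram in the $\infty$-category of presentable $\infty$-categories and colimit-preserving functors; the latter is closed under such limits \cite[\S 5.5.3]{htt}, so $A\laxtimes{C}B$ and $A\times_{C}B$ are presentable, and the detection criterion for colimit-preserving functors out of a presentable $D$ follows from (i) and the essential uniqueness of colimits. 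For (iv): $C^{I}$ is stable \cite[Prop.~1.1.3.1]{halg} and $(s,t)$ is exact, so one concludes either from the closure of stable $\infty$-categories under limits in $\Cat_{\infty}$, or directly: $(0,0,0)$ is a zero object, (i) and its dual give finite colimits and limits computed componentwise, and a square in $A\laxtimes{C}B$ is a pushout iff its images in $A$ and $B$ are, iff they are pullbacks (stability of $A$, $B$), iff the square is a pullback; the recognition criterion \cite[Prop.~1.1.3.4]{halg} then applies, and likewise to $A\times_{C}B$.

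The only real work is part~(i): producing an honest colimit cone inside the model $A\laxtimes{C}B$ --- matching the pointwise colimit cone in $C^{I}$ with the product cone in $A\times B$ over $C\times C$ --- and then verifying the colimit property through the mapping-space square of Remark~\ref{rem:mapping-spaces}. The remaining parts are formal; the key point is that $C$ is never assumed to admit any of the relevant (co)limits, which is exactly why everything must be routed through (i), whose hypothesis only constrains $p$ and $q$ on colimits that already exist in $A$ and $B$.
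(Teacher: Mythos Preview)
Your proof is correct and follows essentially the same route as the paper. The only difference is one of packaging: in part~(i) the paper invokes \cite[Prop.~5.1.2.2]{htt} for the pointwise colimit in $C^{I}$ and then cites \cite[Lemmas~5.4.5.4, 5.4.5.2]{htt} to conclude, whereas you unroll those lemmas by hand---constructing the cone via the homotopy pullback of $\mathrm{Fun}(K^{\triangleright},-)$ and verifying the colimit property through the mapping-space square of Remark~\ref{rem:mapping-spaces}; for (ii)--(iv) both arguments reduce to the same citations and to part~(i).
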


For the definition of a presentable $\infty$-category see \cite[Def.~5.5.0.1]{htt}, for that of an idempotent complete $\infty$-category  \cite[\S4.4.5]{htt}, and for that of a stable $\infty$-category \cite[Def.~1.1.1.9]{halg}.

\begin{proof}
(i) The assumptions and \cite[Prop.~5.1.2.2]{htt} (applied to the projection $C \times I \to I$) imply that the composition of $\delta$ with the projection to $C^{I}$ also admits a colimit. Now the claim follows from \cite[Lemmas~5.4.5.4, 5.4.5.2]{htt}.

(ii) Let $\Idem$ be the nerve of the 1-category with a single object $X$ and $\Hom(X,X) = \{ \id_{X}, e\}$, where $e \circ e = e$. An $\infty$-category $D$ is idempotent complete if and only if any diagram $\Idem \to D$ admits a colimit.\footnote{This is Cor.~4.4.5.15 in the 2017 version of HTT, available at the author's homepage.} It follows from \cite[Prop.~4.4.5.12, Lemma~4.3.2.13]{htt} that every functor between $\infty$-categories $D \to D'$ preserves colimits of diagrams indexed by $\Idem$. Hence the claim follows from part (i).

By construction of the lax pullback, it suffices to check the remaining assertions for  pullbacks  and  functor categories.

(iii) For the functor category see \cite[Prop.~5.5.3.6, Cor.~5.1.2.3]{htt} and  for the pullback \cite[Prop.~5.5.3.12]{htt}.

(iv) See \cite[Prop.~1.1.3.1]{halg} for the functor category, \cite[Prop.\ 1.1.4.2]{halg} for the pullback.
\end{proof}

From now on, we will mainly be concerned with stable $\infty$-categories. Recall that by \cite[Thm.~1.1.2.14]{halg} the homotopy category $\Ho(A)$ of a stable $\infty$-category $A$ is a triangulated category.

\begin{recollection} \label{recollection}
We recall the $\infty$-categorical version of Verdier quotients. For a detailed discussion see \cite[\S 5]{Blumberg-Tabuada-Gepner}.
Let  $\Pr^{\mathrm{L}}_{\st}$ denote the $\infty$-category of presentable stable $\infty$-categories and left adjoint (equivalently, colimit preserving) functors, and let $\Cat^{\exact}_{\infty}$  be the $\infty$-category of small stable $\infty$-categories and exact functors.
Both admit small colimits. 
Given a fully faithful functor $A \to B$ in either of these, $B/A$ denotes its cofibre. By \cite[Prop.~5.9, 5.14]{Blumberg-Tabuada-Gepner} the functor $B \to B/A$ induces an equivalence of the Verdier quotient $\Ho(B) / \Ho(A)$  with $\Ho(B/A)$.

A sequence $A \to B \to C$ in $\Pr^{\mathrm{L}}_{\st}$ or $\Cat^{\exact}_{\infty}$ is called \emph{exact} if the composite is zero, $A \to B$ is fully faithful, and the induced map $B/A \to C$ is an equivalence after idempotent completion.
It follows from \cite[Prop.~5.10]{Blumberg-Tabuada-Gepner} and the above that $A \to B \to C$ is exact if and only $\Ho(A) \to \Ho(B) \to \Ho(C)$ is exact (up to factors) in the sense of triangulated categories (see e.g.~\cite[Def.~3.1.10]{Schlichting}).

If $C$ is a localization of $B$, i.e., the functor $B \to C$ has a fully faithful right adjoint, and $A \to B$ induces an equivalence of $A$ with the kernel of $B \to C$, i.e., the full subcategory of objects of $B$ that map to a zero object in $C$, then $A \to B \to C$ is exact.
\end{recollection}

For the remainder of this section, we
assume that \eqref{diag:diag1} is a diagram of stable $\infty$-categories and exact functors. 

The pair of functors $B \to A \times B$, $b \mapsto (0,b)$, and $B \to C^{I}$, $b \mapsto ( 0 \to q(b) )$, induces a functor $r\colon B \to A \laxtimes{C} B$. 
Similarly, the functors $A \to A \times B$, $a \mapsto (a,0)$, and $A \to C^{I}$, $a \mapsto ( p(a) \to 0 )$, induce a functor $s \colon A \to A \laxtimes{C} B$.

\begin{prop}\label{prop:split-exact-sequence}
Assume that \eqref{diag:diag1} is a diagram of stable $\infty$-categories and exact functors. We have  a split exact sequence
\[
\xymatrix@1{
B \ar[r]_-{r} & A \laxtimes{C} B \ar[r]_-{\pr_{1}}  \ar@/_{1pc}/[l]_-{\pr_{2}} & A ,  \ar@/_{1pc}/[l]_-{s}
}
\]
i.e., the sequence is exact, $\pr_{2}$ and $s$ are right adjoints of $r$, $\pr_{1}$, respectively, and $\id_{B} \simeq \pr_{2}\circ r $, $\pr_{1} \circ s \simeq \id_{A}$ via unit and counit, respectively.
\end{prop}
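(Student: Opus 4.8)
The plan is to verify the three assertions — exactness of the sequence, the adjunctions, and the compatibility of the relevant units and counits — more or less in that order, reducing everything to properties of functor categories and the universal property of the pullback \eqref{diag:diag-lax-pullback}.

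First I would identify the essential image of $r$ and the relevant adjunctions. The functor $r$ sends $b$ to the triple $(0, b, 0\to q(b))$. Its putative right adjoint is $\pr_{2}$. To see $\pr_{2}$ is right adjoint to $r$, I would compute, for an object $(a,b,g)$ of $A\laxtimes{C}B$ and an object $b'$ of $B$, the mapping space $\Map((0,b',0\to q(b')), (a,b,g))$ using the homotopy cartesian square of Remark~\ref{rem:mapping-spaces}. Its four terms are: $\Map_{C^{I}}(0\to q(b'),\, g)$ in the upper right, $\Map_{A}(0,a)\times\Map_{B}(b',b)$ in the lower left, and $\Map_{C}(0, p(a))\times\Map_{C}(0, q(b))$ in the lower right. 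Since $0$ is a zero object, $\Map_{A}(0,a)\simeq *$ and both factors of the lower-right corner are contractible. By stability, a morphism in $C^{I}$ with source $0\to q(b')$ is the same datum as a morphism $q(b')\to q(b)$ in $C$ together with a (automatically contractible space of) nullhomotop(y) of the composite $0\to q(b')\to q(b)$; hence $\Map_{C^{I}}(0\to q(b'), g)\simeq \Map_{C}(q(b'), q(b))$. Feeding this into the pullback square collapses it to $\Map((0,b',0\to q(b')), (a,b,g))\simeq \Map_{B}(b',b)\simeq \Map_{B}(b', \pr_{2}(a,b,g))$, naturally, which exhibits the adjunction $r\dashv \pr_{2}$ together with the unit $\id_{B}\xrightarrow{\sim}\pr_{2}\circ r$ (it is even the identity). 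The argument for $s$ is completely dual: $s$ sends $a$ to $(a,0,p(a)\to 0)$, and computing $\Map((a,b,g), (a',0,p(a')\to 0))$ via the same square and using that $0$ is now a zero object in the target components collapses the square to $\Map_{A}(a,a')$, giving $\pr_{1}\dashv s$ — wait, one must be careful which is the left adjoint; here $\pr_{1}$ is the left adjoint of $s$, so $s$ is a right adjoint of $\pr_{1}$, and the counit $\pr_{1}\circ s\simeq\id_{A}$ is again (essentially) the identity. All four functors are exact, either because they are left or right adjoints between stable $\infty$-categories, or by the colimit-preservation criterion in Lemma~\ref{lem:stability-of-pullbacks}(i).

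Next I would establish exactness of $B\xrightarrow{r} A\laxtimes{C}B \xrightarrow{\pr_{1}} A$. The composite $\pr_{1}\circ r$ sends $b\mapsto 0$, so it is zero. The functor $r$ is fully faithful: from the mapping-space computation above (take $a=0$, $b$, $g = (0\to q(b))$), $\Map(r(b'), r(b))\simeq\Map_{B}(b',b)$. It remains to identify the kernel of $\pr_{1}$ with the essential image of $r$ and invoke the last paragraph of Recollection~\ref{recollection}. An object $(a,b,g)$ lies in the kernel of $\pr_{1}$ iff $a\simeq 0$; but then $g$ is a morphism $p(0)\simeq 0\to q(b)$, and since $\Map_{C}(0,q(b))\simeq *$ the datum of $g$ is contractible, so $(0,b,g)\simeq(0,b,0\to q(b)) = r(b)$ canonically. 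Thus the kernel of $\pr_{1}$ is precisely the essential image of $r$, and $r$ identifies $B$ with this kernel. Finally, $\pr_{1}$ is a localization: it has the fully faithful right adjoint $s$ (fully faithfulness of $s$ was checked above, or follows since the counit $\pr_{1}s\simeq\id_{A}$ is an equivalence). By the criterion recalled at the end of Recollection~\ref{recollection}, the sequence $B\xrightarrow{r} A\laxtimes{C}B\xrightarrow{\pr_{1}} A$ is exact, and combined with the two adjunctions and the unit/counit identifications this is exactly the asserted split exact sequence.

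The main obstacle I anticipate is purely bookkeeping at the level of the mapping-space square of Remark~\ref{rem:mapping-spaces}: one has to be confident that, in the stable setting, a morphism in $C^{I}$ out of an object of the form $0\to q(b')$ (or into an object of the form $p(a')\to 0$) really contributes only the mapping space $\Map_{C}(q(b'), q(b))$ (resp.\ $\Map_{C}(p(a), p(a'))$) after the contractible corners are discarded, i.e.\ that the homotopy pullback genuinely collapses. This is where stability (equivalently, that $0$ is a zero object and that fibre sequences are cofibre sequences) is used, and it is the only place where one cannot argue by soft formalism alone; once it is in place, all three claims fall out simultaneously from the single diagram in Remark~\ref{rem:mapping-spaces}.
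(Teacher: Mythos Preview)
Your approach is essentially the paper's: use the mapping-space square of Remark~\ref{rem:mapping-spaces} to verify $r\dashv\pr_{2}$ and $\pr_{1}\dashv s$, then invoke the last paragraph of Recollection~\ref{recollection} via the observation that $s$ is a fully faithful right adjoint of $\pr_{1}$ and $r$ identifies $B$ with the kernel of $\pr_{1}$. One slip to fix: the lower-right corner of your square is $\Map_{C}(0,p(a))\times\Map_{C}(q(b'),q(b))$, not $\Map_{C}(0,p(a))\times\Map_{C}(0,q(b))$, so only the first factor is contractible; the paper sidesteps this by noting that $c\mapsto(0\to c)$ is left adjoint to the target functor $t\colon C^{I}\to C$, which makes the right vertical map an equivalence outright, and then the pullback collapses to $\Map_{B}(b',b)$ as you claim.
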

\begin{proof} 
By construction, we have $\id_{B} = \pr_{2} \circ r$ and we claim that this is a unit transformation for the desired adjunction (see \cite[Prop.~5.2.2.8]{htt}). That is, we have to show that for any object $b$ in $B$ and $(a',b',g') $ in $A \laxtimes{C} B$
the  map $\Map( r(b), (a', b', g') ) \to \Map(b, b') $ induced by $\pr_{2}$ is an equivalence. This map is the second component of the left vertical map in the diagram
\[
\xymatrix{
\Map( r(b) , (a',b',g') ) \ar[r] \ar[d]   &   \Map( (0 \to q(b) ),g') \ar[d]   \\
\Map(0,a') \times \Map(b,b')   \ar[r]    &    \Map( 0 , p(a') ) \times \Map( q(b), q(b') ).
}
\]
By Remark \ref{rem:mapping-spaces} this diagram is homotopy cartesian. 
Since 
the functor $C \to C^{I}$, $c \mapsto (0 \to c)$, is a left adjoint of $t\colon C^{I} \to C$, the right vertical map is an equivalence. Hence the left vertical map is an equivalence (use  that $\Map(0,a')$ and $\Map(0,p(a'))$ are contractible).

Similarly, one shows that $s$ is a right adjoint of $\pr_{1}$. Since the counit $\pr_{1}\circ s \to \id_{A}$ is an equivalence, $s$ is fully faithful. Since moreover $r$ induces an equivalence of $B$ with the kernel of $\pr_{1}$, the sequence in the statement of the lemma is exact by Recollection~\ref{recollection}.
\end{proof}

We let $\pi$ be the composition of functors $A \laxtimes{C} B \xrightarrow{\pr_{3} } C^{I} \xrightarrow{\Cone} C$, where 
 $\Cone\colon C^{I} \to C$ sends a morphism in $C$ to its cofibre.

\begin{prop}\label{prop:exact-sequence-of-presentable-cats}
Assume that \eqref{diag:diag1} is a diagram of stable $\infty$-categories and exact functors. 
Assume furthermore that  $q\colon B \to C$ admits a fully faithful right adjoint $v \colon C \to B$.
Then the composite
\[
\rho\colon C \xrightarrow{v} B \xrightarrow{r} A \laxtimes{C} B
\]
is a fully faithful right adjoint of $\pi$.
\end{prop}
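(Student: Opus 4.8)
The plan is to produce an explicit counit and verify the adjunction through the mapping-space criterion \cite[Prop.~5.2.2.8]{htt}, exactly in the spirit of the proof of Proposition~\ref{prop:split-exact-sequence}. First I would unwind the definitions: $\rho(c) = r(v(c)) = (0,\, v(c),\, 0 \to qv(c))$, so that $\pr_{3}(\rho(c)) = (0 \to qv(c))$ and $\pi(\rho(c)) = \Cone(0 \to qv(c)) \simeq qv(c)$; thus $\pi \circ \rho \simeq q \circ v$ as functors. Post-composing this equivalence with the counit $\epsilon\colon qv \to \id_{C}$ of the adjunction $q \dashv v$ yields a natural transformation $\widetilde\epsilon\colon \pi \circ \rho \to \id_{C}$, and $\widetilde\epsilon$ is an \emph{equivalence} because $v$ is fully faithful. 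This $\widetilde\epsilon$ is the candidate counit of the adjunction $\pi \dashv \rho$. (Trying instead to realize $\rho = r\circ v$ as a composite of right adjoints fails, since in the absence of presentability $r$ need not admit a left adjoint; hence the direct approach.)

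By (the dual of) \cite[Prop.~5.2.2.8]{htt} it then suffices to show that for every object $X = (a,b,g)$ of $A \laxtimes{C} B$ and every object $c$ of $C$ the composite
\[
\Map( X, \rho(c) ) \xrightarrow{\ \pi\ } \Map_{C}( \pi(X), \pi\rho(c) ) \xrightarrow{\ \widetilde\epsilon_{*}\ } \Map_{C}( \pi(X), c )
\]
is an equivalence. I would compute the source using Remark~\ref{rem:mapping-spaces}: since the first component of $\rho(c)$ is $0$, the spaces $\Map_{A}(a,0)$ and $\Map_{C}(p(a),0)$ are contractible, and the homotopy cartesian square of that remark collapses to an identification
\[
\Map( X, \rho(c) ) \simeq \Map_{B}( b, v(c) ) \times_{\Map_{C}( q(b), qv(c) )} \Map_{C^{I}}( g, 0 \to qv(c) ).
\]
The left leg of this pullback is the map induced by $q$; post-composing it with the equivalence $\epsilon_{*}\colon \Map_{C}(q(b),qv(c)) \xrightarrow{\ \sim\ } \Map_{C}(q(b),c)$ recovers precisely the adjunction equivalence $\Map_{B}(b,v(c)) \simeq \Map_{C}(q(b),c)$, so the left leg is itself an equivalence — this is the single point at which fully faithfulness of $v$ enters. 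Consequently $\pr_{3}$ induces an equivalence $\Map( X, \rho(c) ) \xrightarrow{\ \sim\ } \Map_{C^{I}}( g, 0 \to qv(c) )$.

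It remains to identify $\Map_{C^{I}}( g, 0 \to qv(c) )$ with $\Map_{C}( \Cone(g), qv(c) ) = \Map_{C}(\pi(X),\pi\rho(c))$: by the standard formula for mapping spaces in a functor category, contractibility of $\Map_{C}(p(a),0)$, and the cofibre sequence $p(a) \xrightarrow{g} q(b) \to \Cone(g)$ in $C$, the space $\Map_{C^{I}}( g, 0 \to qv(c) )$ is the fibre of $\Map_{C}(q(b),qv(c)) \to \Map_{C}(p(a),qv(c))$ (precomposition with $g$) over the zero map, which is $\Map_{C}(\Cone(g),qv(c))$. Unwinding $\pi = \Cone \circ \pr_{3}$ shows that this identification, precomposed with the equivalence from the previous paragraph, is exactly the map labeled $\pi$ in the display, and that the remaining $\widetilde\epsilon_{*}$ is the residual $\epsilon_{*}$; hence the displayed composite is an equivalence. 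Then \cite[Prop.~5.2.2.8]{htt} gives that $\widetilde\epsilon$ exhibits $\rho$ as a right adjoint of $\pi$, and $\rho$ is fully faithful because its counit $\widetilde\epsilon$ is an equivalence. The one genuinely delicate point — the step I expect to require the most care — is checking that the chain of equivalences assembled from Remark~\ref{rem:mapping-spaces} and the cofibre sequence really \emph{is} the map induced by $\pi$ followed by $\widetilde\epsilon_{*}$, rather than merely some equivalence between the same two spaces; this is a matter of tracking the definitions of $\Cone$ and of the two counits, but it must be verified for the invocation of \cite[Prop.~5.2.2.8]{htt} to be legitimate.
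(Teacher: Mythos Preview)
Your proposal is correct and follows essentially the same approach as the paper: construct the counit $\pi\rho \to \id_{C}$ from the counit of $(q,v)$, then verify the mapping-space criterion of \cite[Prop.~5.2.2.8]{htt} by using Remark~\ref{rem:mapping-spaces} to reduce to $\pr_{3}$ and then identifying $\Map_{C^{I}}(g,\,0\to qv(c))$ with $\Map_{C}(\Cone(g),qv(c))$. The only cosmetic differences are that the paper packages your cofibre-sequence computation as the adjunction $\Cone \dashv (c \mapsto (0\to c))$, and it observes fully faithfulness of $\rho = r\circ v$ at the outset (as a composite of fully faithful functors, using Proposition~\ref{prop:split-exact-sequence}) rather than deducing it from the counit at the end.
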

\begin{proof}
Since $v$ is fully faithful by assumption, and $r$ is fully faithful by Proposition~\ref{prop:split-exact-sequence}, the functor $\rho$ is fully faithful.
The functor $\Cone\colon C^{I} \to C$ has the right adjoint $\beta$ mapping $c$ to $(0 \to c)$ \cite[Rem.~1.1.1.8]{halg}.
By the construction of $r$ we have  a canonical equivalence $\pr_{3} \circ r \simeq \beta \circ q$. Hence the counit of the adjoint pair $(\Cone,\beta)$ induces a natural transformation $\pi \circ r = \Cone \circ \pr_{3} \circ r \simeq \Cone \circ \beta \circ q \to q$ and hence $\pi \circ r \circ v \to q \circ v$. Composing with the counit of the adjoint pair $(q,v)$ we get a natural transformation $\eta\colon \pi \circ \rho = \pi \circ r \circ v \to \id_{C}$. We claim that $\eta$ is a counit transformation for the desired adjunction. This will imply the claim by \cite[Prop.~5.2.2.8]{htt}. We thus have to show that the composition
\begin{equation}\label{eq:counit-composition}
\Map( (a,b,g), \rho(c) ) \xrightarrow{\pi} \Map( \pi((a,b,g)), \pi(\rho(c)) ) \xrightarrow{\eta} \Map(\pi((a,b,g)), c)
\end{equation}
is an equivalence for every object $(a,b,g)$ in $A \laxtimes{C} B$ and any object $c$ in $C$.

From Remark~\ref{rem:mapping-spaces} we have a homotopy pullback square of spaces
\begin{equation}\label{diag:dddd}\begin{split}
\xymatrix{
\Map( (a,b,g) , \rho(c) ) \ar[r]^{\pr_{3}} \ar[d]  & \Map( g, (0 \to q(v(c))) ) \ar[d]  \\
\Map( a,0) \times \Map(b, v(c) ) \ar[r]  & \Map(p(a),0 ) \times \Map( q(b), q(v(c)) ).
}
\end{split}
\end{equation}
Since $v$ is fully faithful, $q(v(c)) \simeq c$ and the lower horizontal map is an equivalence by adjunction.
Hence the upper horizontal map $\pr_{3}$ is an equivalence, too. 
The $(\Cone,\beta)$-adjunction yields an equivalence
\begin{equation}\label{eq:Cone-beta-adjunction}
\Map(g, (0 \to q(v(c))) ) \xrightarrow{\simeq} \Map( \Cone(g), q(v(c))).
\end{equation}
By construction, \eqref{eq:counit-composition} is the composition of the equivalences $\pr_{3}$ in \eqref{diag:dddd} and \eqref{eq:Cone-beta-adjunction} and the map induced by the counit $q(v(c)) \to c$,  which is an equivalence by fully faithfulness of $v$. Hence \eqref{eq:counit-composition} is an equivalence, as desired.
\end{proof}

\begin{cor} 
	\label{cor:exact-sequence-of-presentable-cats}
Assume that \eqref{diag:diag1} is a diagram in $\Pr^{\mathrm{L}}_{\st}$. 
If the right adjoint of $B \to C$ is fully faithful, then
 the sequence 
\[
A \times_{C} B \to A \laxtimes{C} B  \xrightarrow{\pi} C
\]
is exact.
\end{cor}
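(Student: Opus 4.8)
The plan is to deduce this from Proposition \ref{prop:exact-sequence-of-presentable-cats} together with the characterization of exact sequences via localizations recalled at the end of Recollection \ref{recollection}. By Proposition \ref{prop:exact-sequence-of-presentable-cats}, the hypothesis that $q\colon B \to C$ has a fully faithful right adjoint $v$ gives us a fully faithful right adjoint $\rho = r \circ v$ of $\pi\colon A \laxtimes{C} B \to C$. Thus $\pi$ exhibits $C$ as a localization of $A \laxtimes{C} B$. By the last paragraph of Recollection \ref{recollection}, to conclude that the sequence $A \times_C B \to A \laxtimes{C} B \xrightarrow{\pi} C$ is exact it suffices to show that $A \times_C B \to A \laxtimes{C} B$ is fully faithful and induces an equivalence of $A \times_C B$ with the kernel of $\pi$, i.e. the full subcategory of objects $(a,b,g)$ of $A \laxtimes{C} B$ for which $\pi(a,b,g) = \Cone(g)$ is a zero object in $C$.

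First I would record that the inclusion $A \times_C B \hookrightarrow A \laxtimes{C} B$ is fully faithful: this is immediate from the earlier remark identifying $A \times_C B$ with the full subcategory of $A \laxtimes{C} B$ spanned by the triples $(a,b,g)$ with $g$ an equivalence in $C$ (using also that the lax pullback constructed via \eqref{diag:diag-lax-pullback} is a genuine model for the $\infty$-categorical lax pullback, and that $C^{(I)} \subseteq C^{I}$ is a full subcategory). The heart of the argument is then the identification of the kernel. An object $(a,b,g)$ lies in $\ker(\pi)$ precisely when $\Cone(g) \simeq 0$, and since $C$ is stable this happens if and only if $g\colon p(a) \to q(b)$ is an equivalence in $C$. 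But that is exactly the condition cutting out $A \times_C B$ inside $A \laxtimes{C} B$. Hence the full subcategory $A \times_C B$ coincides with $\ker(\pi)$, and the identification is by construction the inclusion functor, which is fully faithful.

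One small point to check carefully is that everything takes place in $\Pr^{\mathrm{L}}_{\st}$: since \eqref{diag:diag1} is a diagram in $\Pr^{\mathrm{L}}_{\st}$, the functors $p$ and $q$ are exact (indeed colimit-preserving) and $A$, $B$, $C$ are presentable stable, so by Lemma \ref{lem:stability-of-pullbacks}(iii),(iv) both $A \laxtimes{C} B$ and $A \times_C B$ are presentable and stable, and the relevant functors $r$, $\pr_1$, $\pi$, $\rho$ preserve colimits (for $\pi$ this uses that $\pr_3$ and $\Cone$ are exact, and for $\rho$ that $r$ and $v$ are colimit-preserving — $v$ being colimit-preserving because it is the right adjoint of a localization between presentable stable $\infty$-categories, equivalently because it is also a left adjoint, or simply because exactness of $q$ forces $\ker$ and hence the localization to be smashing; in any case $v$ being a right adjoint it is at least limit-preserving, and this is all that is needed for Recollection \ref{recollection}). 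The main obstacle, such as it is, is purely bookkeeping: making sure that the abstract localization criterion of Recollection \ref{recollection} is applied with the correct identification "$A = \ker(B \to C)$" — here with $A \times_C B$ in the role of the kernel and $A \laxtimes{C} B$ in the role of $B$ — and that the kernel computed inside $A \laxtimes{C} B$ really is $A \times_C B$ and not merely equivalent to it in a way that does not respect the inclusion. Both are handled by the stability of $C$ (a map is an equivalence iff its cone vanishes) and the explicit description of $A \times_C B$ as a full subcategory of the lax pullback.
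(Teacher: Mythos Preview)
Your proposal is correct and follows essentially the same approach as the paper: identify $A \times_{C} B$ with $\ker(\pi)$ via the observation that $\Cone(g) \simeq 0$ iff $g$ is an equivalence (stability of $C$), and invoke Proposition~\ref{prop:exact-sequence-of-presentable-cats} together with the localization criterion from Recollection~\ref{recollection}. The only difference is length: the paper does this in three sentences, while your final paragraph of bookkeeping---in particular the discussion of whether $v$ or $\rho$ preserves colimits---is unnecessary, since the criterion in Recollection~\ref{recollection} only requires $\pi$ to have a fully faithful right adjoint, not that this adjoint be a morphism in $\Pr^{\mathrm{L}}_{\st}$.
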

\begin{proof}
An object $(a,b,g)$ of $A \laxtimes{C} B$ belongs to $A \times_{C} B$ if and only if $g$ is an equivalence, if and only if $\Cone(g) \simeq 0$. This shows that the composite is trivial and that $A \times_{C} B$ is precisely the kernel of $\pi$. 
The claim now follows, since  $\pi$ admits a fully faithful right adjoint  by Proposition~\ref{prop:exact-sequence-of-presentable-cats}.
\end{proof}

Let $A'$ be a small stable $\infty$-category. Then the $\infty$-category $\Ind(A')$ of Ind-objects  of $A'$ \cite[Def.~5.3.5.1]{htt} is presentable \cite[Thm.~5.5.1.1]{htt} and  stable \cite[Prop.~1.1.3.6]{halg}.
A stable $\infty$-category $A$ is called \emph{compactly generated} if there exists a small stable $\infty$-category $A'$ and an equivalence $\Ind(A') \simeq A$ (see \cite[Def.~5.5.7.1]{htt} and the text following it). If this is the case, then $A' \to A$ induces an equivalence of the idempotent completion of $A'$ \cite[\S 5.1.4]{htt} with the full stable subcategory $A^{\omega}$ of the compact objects in $A$ \cite[Lemma~5.4.2.4]{htt}.
In particular, if $A$ is compactly generated, $A^{\omega}$ is (essentially) small and $\Ind(A^{\omega}) \simeq A$.
Whether a stable $\infty$-category is idempotent complete or compactly generated only depends  on its homotopy category \cite[Lemma~1.2.4.6, Rem.~1.4.4.3]{halg}.

\begin{prop}
	\label{prop:lax-pullback-cptly-generated}
Assume that \eqref{diag:diag1} is a diagram in $\Pr^{\mathrm{L}}_{\st}$ in which $A$ and $B$ are compactly generated and the functors $p\colon A \to C$ and $q\colon B \to C$ map compact objects to compact objects. Then $A \laxtimes{C} B$ is compactly generated as well and  $(A \laxtimes{C} B)^{\omega} \simeq  A^{\omega} \laxtimes{C^{\omega}} B^{\omega}$.
\end{prop}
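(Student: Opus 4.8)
The plan is to exhibit an explicit set of compact generators for $A \laxtimes{C} B$ coming from compact generators of $A$ and $B$, and then to identify the compact objects. Let me think about what the candidate generators should be. We have the two functors $r\colon B \to A \laxtimes{C} B$ and $s\colon A \to A \laxtimes{C} B$ from Proposition~\ref{prop:split-exact-sequence}. Both are left adjoints (their right adjoints are $\pr_2$ and $\pr_1$), hence colimit-preserving. Moreover, by Lemma~\ref{lem:stability-of-pullbacks}(iii), $A \laxtimes{C} B$ is presentable and colimit-preservation of a functor into it is detected by the projections $\pr_1,\pr_2$; so $r$ and $s$ preserve compact objects iff their postcompositions with $\pr_1$ and $\pr_2$ do. Since $\pr_1 r \simeq 0$, $\pr_2 r \simeq \id_B$, $\pr_1 s \simeq \id_A$, and $\pr_2 s \simeq 0$, this is automatic. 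Hence if $G_A$ is a set of compact generators of $A$ and $G_B$ one of $B$, I claim $\{ s(x) : x \in G_A \} \cup \{ r(y) : y \in G_B \}$ is a set of compact generators of $A \laxtimes{C} B$.

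First I would check these objects are compact: $r$ and $s$ preserve compact objects by the paragraph above, so each $s(x)$ and $r(y)$ is compact. Next I would check they generate, i.e., that an object $(a,b,g)$ of $A \laxtimes{C} B$ for which $\Map(s(x), (a,b,g)) \simeq 0$ for all $x \in G_A$ and $\Map(r(y),(a,b,g)) \simeq 0$ for all $y \in G_B$ must be zero. By adjunction $\Map(s(x),(a,b,g)) \simeq \Map_A(x, \pr_1(a,b,g)) = \Map_A(x,a)$ and $\Map(r(y),(a,b,g)) \simeq \Map_B(y,b)$. Since $G_A$ generates $A$ and $G_B$ generates $B$, the vanishing forces $a \simeq 0$ and $b \simeq 0$; but then $g\colon p(a) \to q(b)$ is a map $0 \to 0$, so $(a,b,g) \simeq 0$ as desired. (One should be slightly careful that "generate" here means compact generation in the sense that the $G$'s detect zero objects after shifts; since everything is stable this is fine, and $A \laxtimes{C} B$ is presentable and stable by Lemma~\ref{lem:stability-of-pullbacks}, so $\Ind((A\laxtimes{C} B)^\omega) \simeq A \laxtimes{C} B$ follows from the standard recognition of compactly generated stable $\infty$-categories.)

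It remains to identify $(A \laxtimes{C} B)^\omega$ with $A^\omega \laxtimes{C^\omega} B^\omega$. The natural functor goes as follows: the inclusions $A^\omega \hookrightarrow A$, $B^\omega \hookrightarrow B$, $C^\omega \hookrightarrow C$ are compatible with $p$ and $q$ (using the hypothesis that $p$ and $q$ preserve compact objects), and since $(-)^I$ is functorial and preserves the property of a morphism being between compact objects, one gets from the pullback defining the lax pullback a canonical functor $\iota\colon A^\omega \laxtimes{C^\omega} B^\omega \to A \laxtimes{C} B$. This $\iota$ is fully faithful: its effect on mapping spaces is governed by the homotopy-cartesian square of Remark~\ref{rem:mapping-spaces}, whose four corners for objects of $A^\omega \laxtimes{C^\omega} B^\omega$ are computed using mapping spaces in $A^\omega, B^\omega, C^\omega$, which agree with those in $A,B,C$ by full faithfulness of the inclusions; the fifth corner $\Map_{(C^\omega)^I}(g,g')$ agrees with $\Map_{C^I}(g,g')$ likewise (the square in Remark~\ref{rem:mapping-spaces} identifies it as a fibre product of the $C$-mapping spaces). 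Moreover $\iota$ lands in $(A\laxtimes{C} B)^\omega$: the objects $(x, y, g)$ with $x \in A^\omega$, $y \in B^\omega$ are built from $s(x)$ and $r(y)$ by a finite colimit — explicitly, the map $g\colon p(x) \to q(y)$ gives, after applying $r$ and using $\pr_3 r \simeq \beta q$, a morphism in $A \laxtimes{C} B$ whose relevant cone/pushout recovers $(x,y,g)$; alternatively observe that $(x,y,g)$ sits in a fibre sequence relating it to $s(x)$ and $r(y)$, both compact, so it is compact. Finally, $\iota$ is essentially surjective onto $(A \laxtimes{C} B)^\omega$: its essential image is a stable subcategory containing the generators $s(G_A)$ and $r(G_B)$ (note $s(x) = \iota(x,0,0)$ and $r(y) = \iota(0,y,0)$) and closed under retracts if we pass to idempotent completions — but $A^\omega \laxtimes{C^\omega} B^\omega$ is already idempotent complete by Lemma~\ref{lem:stability-of-pullbacks}(ii), and $(A\laxtimes{C} B)^\omega$ is the smallest idempotent-complete stable subcategory of $A \laxtimes{C} B$ containing a set of compact generators, hence equals the image of $\iota$.

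The main obstacle I anticipate is the last point — pinning down that the essential image of $\iota$ is \emph{all} of $(A \laxtimes{C} B)^\omega$ and not just a dense subcategory. The cleanest route is: $A^\omega \laxtimes{C^\omega} B^\omega$ is idempotent complete (Lemma~\ref{lem:stability-of-pullbacks}(ii)), it is a stable subcategory of $A \laxtimes{C} B$ via the fully faithful $\iota$ whose objects are all compact, and it contains the compact generators $s(G_A), r(G_B)$; since $\Ind$ of it must then be $A \laxtimes{C} B$ (as its $\Ind$-completion is a full presentable stable subcategory containing all the generators), its objects are precisely the compact objects of $A \laxtimes{C} B$ by \cite[Lemma~5.4.2.4]{htt}. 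One should double-check the subtlety that "generated by $s(G_A) \cup r(G_B)$ as a compactly generated category" indeed forces the idempotent-complete stable hull of those generators to be everything compact, which is exactly the content of that lemma combined with the recognition theorem \cite[Thm.~5.5.1.1, Prop.~1.1.3.6]{halg} already invoked for $\Ind$.
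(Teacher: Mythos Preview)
Your overall strategy coincides with the paper's: exhibit $D' := A^{\omega} \laxtimes{C^{\omega}} B^{\omega}$ as an idempotent-complete small stable subcategory of compact objects, and show that $\Ind(D') \to A \laxtimes{C} B$ is an equivalence, so that $D' \simeq (A \laxtimes{C} B)^{\omega}$. The paper's write-up is somewhat terser (it cites \cite[Lemmas~5.4.5.7, 5.3.4.9]{htt} to get compactness of the objects of $D'$ directly, and uses the fibre sequence $r(\pr_{2}X) \to X \to s(\pr_{1}X)$ from Proposition~\ref{prop:split-exact-sequence} to see that the essential image of $\Ind(D')$ is everything), but the architecture is the same.

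There is, however, a genuine error in your generation argument. You assert that $s$ is a left adjoint with right adjoint $\pr_{1}$, and then invoke the adjunction to get $\Map(s(x),(a,b,g)) \simeq \Map_{A}(x,a)$. But Proposition~\ref{prop:split-exact-sequence} says the opposite: $s$ is the \emph{right} adjoint of $\pr_{1}$. In fact $s$ is not left adjoint to $\pr_{1}$ in general: already for $A=B=C$ and $p=q=\id$, where $A \laxtimes{C} B \simeq C^{I}$ and $\pr_{1}$ is the source functor, the left adjoint of the source is $c \mapsto (c \xrightarrow{\id} c)$, not $s(c) = (c \to 0)$. Concretely, Remark~\ref{rem:mapping-spaces} gives that $\Map(s(x),(a,b,g))$ is the fibre of $\Map_{A}(x,a) \to \Map_{C}(p(x),q(b))$ over the zero map (the map being $\phi \mapsto g \circ p(\phi)$), which is not $\Map_{A}(x,a)$ unless $q(b) \simeq 0$.

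The fix is short and is effectively what the paper does. First use the (correct) adjunction $r \dashv \pr_{2}$ to deduce $b \simeq 0$ from the vanishing of $\Map(r(y),(a,b,g)) \simeq \Map_{B}(y,b)$. Then $(a,b,g) \simeq s(a)$, and since $s$ is fully faithful (as a right adjoint with invertible counit) you get $\Map(s(x),s(a)) \simeq \Map_{A}(x,a)$, forcing $a \simeq 0$. Alternatively, bypass the detection argument entirely and argue as the paper does: $r$ and $s$ preserve colimits (by Lemma~\ref{lem:stability-of-pullbacks}(iii), checking after $\pr_{1},\pr_{2}$ --- this part of your reasoning is fine), so the essential image of $\Ind(D')$ contains $r(B)$ and $s(A)$; the fibre sequence above then shows it contains every object. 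Your side remark that Lemma~\ref{lem:stability-of-pullbacks}(iii) lets you detect \emph{compactness} via $\pr_{1},\pr_{2}$ is also not what that lemma says; the compactness of objects of $D'$ is better justified either by the HTT lemmas the paper cites, or by your own fibre-sequence observation that each $(x,y,g)\in D'$ is an extension of $s(x)$ by $r(y)$ (with $r(y)$ compact since $r \dashv \pr_{2}$ and $\pr_{2}$ preserves filtered colimits, and $s(x)$ compact since it lies in $D'$).
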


\begin{proof}
By Lemma~\ref{lem:stability-of-pullbacks}(iii) the $\infty$-category $A \laxtimes{C} B$ is presentable and hence admits all small colimits.
Let $D' := A^{\omega} \laxtimes{C^{\omega}} B^{\omega}$. This is an (essentially) small full stable subcategory of $A \laxtimes{C} B$. It follows from \cite[Lemmas 5.4.5.7, 5.3.4.9]{htt} that $D'$ consists of compact objects in $A \laxtimes{C} B$. Hence the induced functor $\Ind(D') \to A \laxtimes{C} B$ is fully faithful.
Since the functors $r\colon B \to A \laxtimes{C} B$ and $s\colon A \to A \laxtimes{C} B$ preserve colimits by Lemma~\ref{lem:stability-of-pullbacks}(iii) and since $A$ and $B$ are compactly generated, it follows that the essential image of $\Ind(D')$ in $A \laxtimes{C} B$ contains $A$ and $B$. Proposition~\ref{prop:split-exact-sequence} implies that every object $X$ of $A \laxtimes{C} B$ sits in a fibre sequence $X' \to X \to X''$ with $X' \in B$ and $X'' \in A$.
Hence the essential image of $\Ind(D')$ must be all of $A \laxtimes{C} B$, and hence the latter is compactly generated.
Since $A^{\omega}$ and $B^{\omega}$ are idempotent complete, so is $D'$ by Lemma~\ref{lem:stability-of-pullbacks}(ii). 
Hence $D' \simeq (A \laxtimes{C} B)^{\omega}$.
\end{proof}

\begin{dfn}
	\label{dfn:excisive-square}
An \emph{excisive square} of small stable $\infty$-categories is a commutative square 
\begin{equation}
	\label{diag:excisive-square}
	\begin{split}
	\xymatrix@C-0.2cm@R-0.2cm{
	D \ar[r] \ar[d] & B \ar[d]^-{q}\\
	A \ar[r]^-{p} & C
	}
	\end{split}
\end{equation}
in $\Cat^{\exact}_{\infty}$ such that the induced square 
\begin{equation}
	\label{diag:ind-excisive-square}
	\begin{split}
	\xymatrix@C-0.3cm@R-0.3cm{
	\Ind(D) \ar[r] \ar[d] & \Ind(B) \ar[d]^{}\\
	\Ind(A) \ar[r]^{} & \Ind(C)
	}
	\end{split}
\end{equation}
in $\Pr^{\mathrm{L}}_{\st}$ is a pullback square and $\Ind(B) \to \Ind(C)$ is a localization, i.e., its right adjoint is fully faithful.
\end{dfn}

The following is the categorical version of our first main result.
\begin{thm}
	\label{thm:main-theorem-new}
Assume that \eqref{diag:excisive-square} is an excisive square of small stable $\infty$-categories. Then there is an exact sequence
\begin{equation}
	\label{eq:fundamental-exact-sequence-new}
D \xrightarrow{i} A \laxtimes{C} B \xrightarrow{\pi} C.
\end{equation}
\end{thm}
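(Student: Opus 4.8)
The plan is to apply $\Ind(-)$ to the excisive square \eqref{diag:excisive-square}, to read off from the results of this section a localization sequence of compactly generated stable $\infty$-categories, and then to pass to compact objects by means of Neeman's localization theorem. Applying $\Ind(-)$ gives a diagram $\Ind(A)\xrightarrow{\Ind(p)}\Ind(C)\xleftarrow{\Ind(q)}\Ind(B)$ in $\Pr^{\mathrm{L}}_{\st}$; since the square is excisive, the right adjoint of $\Ind(q)$ is fully faithful and the canonical comparison exhibits $\Ind(D)\simeq\Ind(A)\times_{\Ind(C)}\Ind(B)$. Hence Corollary~\ref{cor:exact-sequence-of-presentable-cats} applies and shows that
\[
\Ind(D)\;\simeq\;\Ind(A)\times_{\Ind(C)}\Ind(B)\;\longrightarrow\;\mathcal{B}\;\xrightarrow{\ \pi\ }\;\Ind(C),\qquad\mathcal{B}:=\Ind(A)\laxtimes{\Ind(C)}\Ind(B),
\]
is exact; in fact $\Ind(D)$ is precisely the kernel of the localization $\pi$.

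Next I pass to compact objects. The functors $\Ind(p)$ and $\Ind(q)$ are Ind-extensions of functors between small stable $\infty$-categories, hence preserve compact objects, so Proposition~\ref{prop:lax-pullback-cptly-generated} shows that $\mathcal{B}$ is compactly generated with $\mathcal{B}^{\omega}\simeq\Ind(A)^{\omega}\laxtimes{\Ind(C)^{\omega}}\Ind(B)^{\omega}$. Write $\widehat{(-)}$ for idempotent completion, so that $\Ind(A)^{\omega}\simeq\widehat{A}$ and likewise for $B$ and $C$; a short chase with the mapping-space square of Remark~\ref{rem:mapping-spaces} shows that $A\laxtimes{C}B$ is a dense subcategory of $\widehat{A}\laxtimes{\widehat{C}}\widehat{B}$, so the latter is the idempotent completion of the former. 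Moreover $\Ind(D)$ is compactly generated by the image of $D$, and these generators map, under $\Ind(D)\simeq\Ind(A)\times_{\Ind(C)}\Ind(B)\hookrightarrow\mathcal{B}$, into $A\laxtimes{C}B\subseteq\mathcal{B}^{\omega}$; thus the kernel $\Ind(D)$ of $\pi$ is generated by a set of objects compact in $\mathcal{B}$.

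Now Neeman's generalization of Thomason's localization theorem \cite[Thm.~2.1]{Neeman-connection}, applied to the underlying triangulated categories, shows that $\mathcal{B}^{\omega}\to\Ind(C)^{\omega}$ has kernel $\Ind(D)^{\omega}$ and that the induced functor out of the Verdier quotient is an equivalence up to idempotent completion; in other words,
\[
\widehat{D}\;\longrightarrow\;\widehat{A}\laxtimes{\widehat{C}}\widehat{B}\;\longrightarrow\;\widehat{C}
\]
is an exact sequence of small stable $\infty$-categories. By the identification of the middle term with the idempotent completion of $A\laxtimes{C}B$, together with the fact recorded in Recollection~\ref{recollection} that exactness is detected on homotopy categories up to factors — and homotopy categories are unchanged up to factors by idempotent completion — it follows that the sequence \eqref{eq:fundamental-exact-sequence-new} is itself exact. (That the composite $\pi\circ i$ is zero and that $i$ is fully faithful is also visible directly, since $i$ factors through the full subcategory $A\times_{C}B\subseteq A\laxtimes{C}B$, on which $\pi$ vanishes because $\Cone(g)\simeq 0$ for $g$ an equivalence.)

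The step I expect to be the main obstacle is the passage to compact objects. It is precisely here that one must work with the lax pullback $\mathcal{B}$ rather than with the ordinary homotopy pullback $\Ind(A)\times_{\Ind(C)}\Ind(B)$: only the lax pullback is guaranteed to be compactly generated, and by objects through which $D$ factors, so that Neeman's theorem becomes applicable. For the ordinary pullback this can fail — cf.\ Keller's example mentioned in the introduction — which is the reason the proof is routed through the lax pullback at all.
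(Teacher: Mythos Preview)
Your proof is correct and follows essentially the same route as the paper's: apply Corollary~\ref{cor:exact-sequence-of-presentable-cats} to the $\Ind$-completed diagram, invoke Proposition~\ref{prop:lax-pullback-cptly-generated} to see that the lax pullback is compactly generated with the expected compact objects, and then descend to compact objects via the Thomason--Neeman theorem. You spell out the idempotent-completion bookkeeping (the density of $A\laxtimes{C}B$ in $\widehat{A}\laxtimes{\widehat{C}}\widehat{B}$ and the fact that $D$ lands in compact objects of $\mathcal{B}$) more explicitly than the paper does, but the argument is the same; the density claim is perhaps more cleanly justified by rerunning the proof of Proposition~\ref{prop:lax-pullback-cptly-generated} with generators taken from $A$, $B$ rather than $\widehat{A}$, $\widehat{B}$, than by the mapping-space square alone.
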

\begin{proof}
If we apply Corollary~\ref{cor:exact-sequence-of-presentable-cats} to the pullback diagram \eqref{diag:ind-excisive-square}, we get the exact sequence
\[
\Ind(D) \to \Ind(A) \laxtimes{\Ind(C)} \Ind(B) \to \Ind(C)
\]
in $\Pr^{\mathrm{L}}_{\st}$. Clearly, the first and the third term in this sequence are compactly generated. Proposition~\ref{prop:lax-pullback-cptly-generated} implies that also the middle term is compactly generated, and that the functors preserve compact objects.
Recall from Recollection~\ref{recollection} that we can test exactness on the level of  homotopy categories.
Thus we may apply the Thomason--Neeman localization theorem \cite[Thm.~2.1]{Neeman-connection} to conclude that the induced sequence of compact objects is exact.
But up to idempotent completion this is exactly \eqref{eq:fundamental-exact-sequence-new}.
\end{proof}

We now apply this to localizing invariants.
\begin{dfn}
	\label{dfn:weakly-localizing}
A \emph{weakly localizing invariant} is a functor 
\[
E \colon \Cat^{\exact}_{\infty} \to T
\]
from $\Cat^{\exact}_{\infty}$ to some stable $\infty$-category $T$ which sends exact sequences in $\Cat^{\exact}_{\infty}$ to fibre sequences in $T$.
\end{dfn}

\begin{ex}
Any localizing invariant in the sense of \cite{Blumberg-Tabuada-Gepner} is weakly localizing. 
Concrete examples are non-connective algebraic $K$-theory \`a la Bass-Thomason \cite[\S9.1]{Blumberg-Tabuada-Gepner}, topological Hochschild homology $THH$ \cite[\S10.1]{Blumberg-Tabuada-Gepner}, or $p$-typical topological cyclic homology $TC$ for some prime $p$ \cite[\S10.3]{Blumberg-Tabuada-Gepner}, \cite{Blumberg-Mandell}. In all these examples $T$ is the $\infty$-category of spectra.
\end{ex}

\begin{thm}\label{thm:excisive-square}
Assume that \eqref{diag:excisive-square} is an excisive square of small stable $\infty$-categories, and let 
$E\colon \Cat^{\exact}_{\infty} \to T$ be a weakly localizing invariant.
 Then the induced square in $T$
\begin{equation}
	\label{diag:thm-excisive-square}
	\begin{split}
\xymatrix@C-0.3cm@R-0.3cm{
E(D) \ar[r] \ar[d] &  E(B) \ar[d] \\
E(A) \ar[r] & E(C)
}
\end{split}
\end{equation}
is cartesian.
\end{thm}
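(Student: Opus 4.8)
The plan is to feed the exact sequence produced by Theorem~\ref{thm:main-theorem-new} into $E$ and combine it with the split exact sequence of Proposition~\ref{prop:split-exact-sequence}. First I would apply $E$ to the exact sequence \eqref{eq:fundamental-exact-sequence-new}, namely $D \xrightarrow{i} A \laxtimes{C} B \xrightarrow{\pi} C$, which by Definition~\ref{dfn:weakly-localizing} yields a fibre sequence
\[
E(D) \to E(A \laxtimes{C} B) \to E(C)
\]
in $T$. Next I would apply $E$ to the split exact sequence $B \xrightarrow{r} A \laxtimes{C} B \xrightarrow{\pr_{1}} A$ from Proposition~\ref{prop:split-exact-sequence}. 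Since this sequence is split (with splittings $\pr_{2}$ and $s$), its image under $E$ is not merely a fibre sequence but a \emph{split} fibre sequence, so that $E(A \laxtimes{C} B) \simeq E(B) \oplus E(A)$ compatibly with the structure maps; in particular the map $E(\pr_{1})\colon E(A \laxtimes{C} B) \to E(A)$ admits the section $E(s)$ and its fibre is identified with $E(B)$ via $E(r)$.

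The core of the argument is then to splice these two decompositions together. Concretely, I would form the commutative diagram in $T$ whose rows are the two fibre sequences above and note that $E(i)$ factors the inclusion: by construction $i$ is the functor $D \to A \laxtimes{C} B$ coming from the excisive square, and the composite $\pr_{1}\circ i$ is (up to equivalence) the functor $D \to A$ appearing in \eqref{diag:excisive-square}, while $\pr_{2}\circ i$ is the functor $D \to B$. Thus after applying $E$ one obtains a map of fibre sequences
\[
\begin{array}{ccccc}
\text{(nothing)} & & E(D) & \to & E(C)\\
& & \downarrow E(i) & & \| \\
E(B) & \to & E(A \laxtimes{C} B) & \to & E(A) \text{ (split)}
\end{array}
\]
Using the splitting $E(A \laxtimes{C} B)\simeq E(B)\oplus E(A)$ and the identification of $E(\pi)$ on each summand — on the $E(B)$-summand $\pi\circ r \simeq q$, so $E(\pi)|_{E(B)} = E(q)$, and on the $E(A)$-summand $\pi \circ s \simeq p[1]$ (since $s$ sends $a$ to $p(a)\to 0$, whose cone is $p(a)[1]$), so $E(\pi)|_{E(A)} = E(p)[1]$ — one rewrites the first fibre sequence as
\[
E(D) \to E(B)\oplus E(A) \xrightarrow{(E(q),\, -E(p)[1])} E(C).
\]
Identifying $E(C)$ with $\Omega E(C)[1]$ and shifting, this fibre sequence is exactly the statement that
\[
E(D) \simeq E(B) \times_{E(C)} E(A),
\]
i.e.\ that the square \eqref{diag:thm-excisive-square} is cartesian in $T$.

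The main obstacle I expect is bookkeeping the identifications of the maps $E(\pi)\circ E(r)$ and $E(\pi)\circ E(s)$, including the degree shift coming from $\Cone$ applied to $p(a)\to 0$, and making sure the signs work out so that the resulting square (rather than a twisted variant) is cartesian. A cleaner route that avoids chasing the shift by hand is to argue purely formally: the split exact sequence of Proposition~\ref{prop:split-exact-sequence} shows that for \emph{any} weakly localizing $E$ the functor $A\laxtimes{C}B \mapsto E(A\laxtimes{C}B)$ is naturally $E(A)\oplus E(B)$, and Theorem~\ref{thm:main-theorem-new} together with the analogous split sequence $A \to A\laxtimes{C}B \to B$ (with roles reversed) pins down the two ``off-diagonal'' maps $E(D)\to E(A)$, $E(D)\to E(B)$ and the map to $E(C)$ as precisely $E(p)$, $E(q)$, and their difference; since a square of spectra (or objects of a stable $\infty$-category) is cartesian iff the total cofibre vanishes, and the total cofibre here is computed by the exact sequence \eqref{eq:fundamental-exact-sequence-new}, one concludes directly. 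I would present the formal version, invoking stability of $T$ to pass freely between fibre and cofibre sequences and to reduce ``cartesian'' to ``cofibre of $E(D)\to E(A)\oplus E(B)$ is $E(C)$'', which is exactly the content of the fibre sequence obtained from Theorem~\ref{thm:main-theorem-new}.
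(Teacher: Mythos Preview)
Your approach is exactly the paper's: apply $E$ to the exact sequence of Theorem~\ref{thm:main-theorem-new}, use the split exact sequence of Proposition~\ref{prop:split-exact-sequence} to identify $E(A\laxtimes{C}B)\simeq E(A)\oplus E(B)$, and then read off the maps $E(\pi)\circ E(s)$ and $E(\pi)\circ E(r)$.

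The one point to fix is the sign/shift bookkeeping you flagged. The endofunctor $\Sigma\colon C\to C$ induces $-\id_{E(C)}$ on $E(C)$; it does \emph{not} induce the shift $[1]$ in $T$. These are different operations: the former is an automorphism of the object $E(C)$, the latter moves to a different object of $T$. Hence $E(\pi\circ s)=E(\Sigma\circ p)=-E(p)$, not $E(p)[1]$ or $-E(p)[1]$, and the fibre sequence you obtain is directly
\[
E(D)\longrightarrow E(A)\oplus E(B)\xrightarrow{\;(-E(p),\,E(q))\;} E(C),
\]
with no further shifting required to conclude that \eqref{diag:thm-excisive-square} is cartesian. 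The paper handles this by the sentence ``the endofunctor $\Sigma\colon C\to C$ induces $-\id$ on $E(C)$''; once that is said, your ``cleaner route'' and the paper's argument coincide.
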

\begin{proof}
Applying $E$ to the exact sequence \eqref{eq:fundamental-exact-sequence-new} provided by Theorem~\ref{thm:main-theorem-new} yields the fibre sequence
\begin{equation}
	\label{eq:E-excisive-square}
E(D) \xrightarrow{E(i)} E( A \laxtimes{C} B) \xrightarrow{E(\pi)} E(C)
\end{equation}
in $T$. On the other hand, applying $E$ to the split exact sequence of Proposition~\ref{prop:split-exact-sequence} gives an equivalence
\begin{equation}
	\label{eq:E-split-exact-sequence}
E(s) \oplus E(r) \colon E(A) \oplus E(B) \xrightarrow{\simeq} E( A \laxtimes{C} B )
\end{equation}
with inverse induced by the projections $\pr_{1}$, $\pr_{2}$. Combining \eqref{eq:E-excisive-square} and \eqref{eq:E-split-exact-sequence}, we get a fibre sequence
\begin{equation}
	\label{eq:E-fibre-sequence}
E(D) \to E(A) \oplus E(B) \to E(C)
\end{equation}
where the first map is induced by the given functors $D \to A$ and $D \to B$. The map $E(A) \to E(C)$ is induced by the functor $a \mapsto \Cone( p(a) \to 0 ) \simeq \Sigma p(a)$.
Since the endofunctor $\Sigma\colon C \to C$ induces $-\id$ on $E(C)$, the map $E(A) \to E(C)$ in \eqref{eq:E-fibre-sequence} is the negative of the map induced by the functor $p\colon A \to C$. Finally, the map $E(B) \to E(C)$ in \eqref{eq:E-fibre-sequence} is induced by the functor $b \mapsto \Cone( 0 \to q(b) ) \simeq q(b)$. Thus \eqref{eq:E-fibre-sequence} being a fibre sequence in $T$ implies that \eqref{diag:thm-excisive-square} is cartesian.
\end{proof}

\begin{rem}
This theorem can also be used to prove the Mayer-Vietoris property of algebraic $K$-theory for the Zariski topology \cite[Thm.~8.1]{thomason} for quasi-compact quasi-separated schemes without using Thomason's localization theorem \cite[Thm.~7.4]{thomason}. Together with Example~\ref{ex:Suslin-Nisnevich} one may then deduce Nisnevich descent for  noetherian schemes in general.
\end{rem}

\section{Application to ring spectra}
\label{section2}

In this section, we apply the constructions of Section~\ref{section1} to the $\infty$-categories of (perfect) modules over an $E_{1}$-ring spectrum, discuss Tor-unitality, and  
we prove our second main result (Theorem~\ref{thm:excisive-square-ring-spectra}) saying that a pullback square of ring spectra where one map is Tor-unital (Definition~\ref{dfn:tor-unital}) yields an excisive square upon applying $\Perf(-)$. From this we finally deduce Theorems~\ref{thm:thm1} and~\ref{thm:thm2} of the Introduction.

The $\infty$-categories of $E_{1}$-ring spectra and their modules are discussed in \cite[Ch.~7]{halg}.
For an $E_{1}$-ring spectrum $A$, we write $\LMod(A)$ for the stable $\infty$-category of left $A$-module spectra, which we will simply call left $A$-modules henceforth.
A left $A$-module is called \emph{perfect} if it belongs to the smallest stable subcategory $\Perf(A)$ of $\LMod(A)$ which contains $A$ and is closed under retracts. By \cite[Prop.~7.2.4.2]{halg}, $\LMod(A)$ is compactly generated and the compact objects are precisely the perfect $A$-modules.

\begin{ex}
Any discrete ring $A$ can be considered as an $E_{1}$-ring spectrum. Then $\Ho(\LMod(A))$ is equivalent to the unbounded derived category of $A$ in the classical sense \cite[Rem.~7.1.1.16]{halg}.
\end{ex}

\begin{dfn}
	\label{dfn:tor-unital}
A map $f\colon A \to A'$  of  $E_{1}$-ring spectra  is called \emph{Tor-unital} if  the following equivalent conditions are satisfied:
\begin{enumerate}
\item The map $A'\otimes_{A} A' \to A'$ given by  multiplication is an equivalence.
\item The map $A' \to A' \otimes_{A} A'$ induced from $A \to A'$ by $A' \otimes_{A} (-)$ is an equivalence.
\item If $I$ is  the fibre of $A \to A'$ in $\LMod(A)$, we have $A' \otimes_{A} I \simeq 0$.
\end{enumerate}
\end{dfn}

We have the following easy but important further characterization of Tor-unitality:
\begin{lemma}\label{lem:Tor-unital-implies-ff}
A morphism $A \to A'$ of $E_{1}$-ring spectra is Tor-unital if and only if the forgetful functor $\LMod(A') \to \LMod(A)$ is fully faithful.
\end{lemma}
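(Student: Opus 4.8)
The plan is to use the standard adjunction between extension and restriction of scalars. Write $f^{*}\colon \LMod(A) \to \LMod(A')$ for extension of scalars $M \mapsto A' \otimes_{A} M$, and $f_{*}\colon \LMod(A') \to \LMod(A)$ for the forgetful functor; these form an adjoint pair $(f^{*}, f_{*})$. By a standard criterion (see \cite[Prop.~5.2.2.8]{htt} or the dual form), $f_{*}$ is fully faithful if and only if the counit $f^{*}f_{*} N \to N$ is an equivalence for every left $A'$-module $N$. So the task reduces to comparing condition (i) of Definition~\ref{dfn:tor-unital} with the statement that $A' \otimes_{A} N \to N$ is an equivalence for all $N \in \LMod(A')$ (where the $A$-module structure on $N$ is obtained by restriction, and the map is the action map).

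First I would prove the ``only if'' direction. Assume $A' \otimes_{A} A' \to A'$ is an equivalence; equivalently, by (iii), that $A' \otimes_{A} I \simeq 0$ where $I = \fib(A \to A')$. Given any $N \in \LMod(A')$, restrict scalars along $f$ and tensor the fibre sequence $I \to A \to A'$ of left $A$-modules with $N$ over $A$; since tensoring is exact, we get a fibre sequence $I \otimes_{A} N \to N \to A' \otimes_{A} N$. It therefore suffices to show $I \otimes_{A} N \simeq 0$. For this, observe that $N$, as a left $A'$-module, is built under colimits and shifts from $A'$ itself (it lies in the localizing subcategory generated by $A'$ in $\LMod(A')$); since $(-) \otimes_{A} (-)$ preserves colimits and $I \otimes_{A} A' \simeq 0$ by hypothesis, we conclude $I \otimes_{A} N \simeq 0$ as desired. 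The cleanest way to phrase this is: the full subcategory of $\LMod(A')$ on which the counit $A' \otimes_{A} N \to N$ is an equivalence is a localizing subcategory (closed under colimits and shifts, by exactness and colimit-preservation of $\otimes_{A}$ and the fact that $f_{*}$ preserves colimits), and it contains $A'$ by condition (i) applied in the form (ii); hence it is all of $\LMod(A')$.

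The ``if'' direction is immediate: if $f_{*}$ is fully faithful, then in particular the counit $A' \otimes_{A} A' \to A'$ at the object $N = A'$ is an equivalence, which is condition (i).

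I expect the main (and really only) subtlety to be the verification that the subcategory of $N \in \LMod(A')$ for which the counit is an equivalence is localizing and that $A'$ generates $\LMod(A')$ under colimits and shifts — i.e., the reduction of a statement about all modules to the statement about the free rank-one module. This is routine given that $f^{*}$ is a left adjoint (hence colimit-preserving), $f_{*}$ is exact and colimit-preserving (true for the forgetful functor between module categories over $E_{1}$-rings, e.g.\ by \cite[Cor.~4.2.3.5]{halg}), and $\LMod(A')$ is compactly generated by the perfect modules, in particular by $A'$ together with its shifts and retracts \cite[Prop.~7.2.4.2]{halg}. No genuine obstacle arises; the lemma is essentially a repackaging of the definition of Tor-unitality through the counit of the extension/restriction adjunction.
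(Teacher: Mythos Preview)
Your proposal is correct and follows essentially the same approach as the paper: both use the extension/restriction adjunction, reduce fully faithfulness of the forgetful functor to the counit $A'\otimes_{A} N \to N$ being an equivalence for all $N$, take $N=A'$ for one direction, and for the other use that $A'$ generates $\LMod(A')$ under colimits (and shifts/finite limits) together with colimit-preservation of the tensor product. Your write-up is slightly more detailed (the explicit fibre-sequence argument with $I$), but there is no substantive difference.
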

\begin{proof}
By \cite[Prop.~4.6.2.17]{halg} the forgetful functor $v$ is right adjoint to $A' \otimes_{A} - \colon \LMod(A) \to \LMod(A')$. It is fully faithful if and only if the counit $A' \otimes_{A} M \to M$ is an equivalence for every $A'$-module $M$. Taking $M=A'$, we see that fully faithfulness of $v$ implies Tor-unitality of $A \to A'$. The converse follows, since $\LMod(A')$ is generated by $A'$ under small colimits and finite limits, and the tensor product preserves both.
\end{proof}

Now consider any pullback square of  $E_{1}$-ring spectra
\begin{equation}\label{diag:ring-spectra}
\begin{split}
\xymatrix@C-0.3cm@R-0.3cm{
A \ar[r] \ar[d] & A' \ar[d] \\
B \ar[r] & B'.
}
\end{split}
\end{equation}

\begin{lemma} \label{lem:stability-of-Tor-unitality}
Assume that \eqref{diag:ring-spectra} is a pullback square of $E_{1}$-ring spectra in which $A \to A'$ is Tor-unital.
Then also $B \to B'$ is Tor-unital. Moreover, the canonical map $A' \otimes_{A} B \to A' \otimes_{A} B'$ 
 induced from $B \to B'$ is an equivalence.
\end{lemma}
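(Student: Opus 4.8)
The plan is to exploit the fact that the fibre $I$ of $A\to A'$ in $\LMod(A)$ agrees, after forgetting structure, with the fibre of $B\to B'$ in $\LMod(B)$. Indeed, since \eqref{diag:ring-spectra} is a pullback square of spectra (or of $A$-modules after restricting everything along $A\to B$), the square remains a pullback after shifting, so the fibres of the two horizontal maps are equivalent: if $I = \fib(A\to A')$ and $J = \fib(B\to B')$, then $J\simeq I$ as $A$-modules, and in fact $J \simeq B\otimes_A I$ once one knows the square is a pullback of $A$-modules. The key first step is therefore to record that \eqref{diag:ring-spectra} is a pullback square in $\LMod(A)$, hence $\fib(B\to B') \simeq \fib(A\to A') =: I$.

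Next I would verify Tor-unitality of $B\to B'$ using characterization (iii) of Definition~\ref{dfn:tor-unital}: I must show $B'\otimes_B J \simeq 0$, where $J = \fib(B\to B')$. Using $J\simeq I$ and the identification $B'\otimes_B J \simeq B'\otimes_B B\otimes_A I \simeq B'\otimes_A I$, it suffices to show $B'\otimes_A I \simeq 0$. But $B'$ is a $B$-module, hence (via $A\to B$) the restriction of scalars gives $B'\otimes_A I \simeq B'\otimes_{A'}(A'\otimes_A I)$ — one factors the tensor through $A'$ because $B'$ is an $A'$-module as well (it receives a map from $A'$). Since $A\to A'$ is Tor-unital, $A'\otimes_A I \simeq 0$ by Definition~\ref{dfn:tor-unital}(iii), and therefore $B'\otimes_A I \simeq 0$, which gives Tor-unitality of $B\to B'$.

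For the second assertion, consider the canonical map $A'\otimes_A B \to A'\otimes_A B'$. Its fibre is $A'\otimes_A \fib(B\to B') \simeq A'\otimes_A J \simeq A'\otimes_A I$, and again the latter vanishes by Tor-unitality of $A\to A'$. Hence the map $A'\otimes_A B \to A'\otimes_A B'$ has trivial fibre and is an equivalence.

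The main obstacle I anticipate is bookkeeping the module structures correctly: $B'$ must be regarded simultaneously as a left $B$-module and, compatibly, as a left $A'$-module (via the two maps $B\to B'$ and $A'\to B'$ in the square), and one must be careful that the two tensor-product reassociations $B'\otimes_B B\otimes_A I \simeq B'\otimes_A I$ and $B'\otimes_A I \simeq B'\otimes_{A'}(A'\otimes_A I)$ are valid with these structures — the latter uses that $I$ is an $A$-module whose image $A'\otimes_A I$ carries the $A'$-module structure through which $B'$ acts, so one genuinely needs the square to commute. Once the structures are pinned down, each step is a routine manipulation of derived tensor products over $E_1$-ring spectra, using only associativity of relative tensor products \cite[\S4.4]{halg} and the exactness of $(-)\otimes_A I$.
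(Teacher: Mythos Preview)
Your argument for the second assertion is correct and is exactly the paper's first step: the fibre of $A'\otimes_A B \to A'\otimes_A B'$ is $A'\otimes_A J$, and since the pullback square identifies $J\simeq I$ as left $A$-modules, this vanishes by Tor-unitality of $A\to A'$.

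The argument for Tor-unitality of $B\to B'$, however, has a genuine gap. You assert that $J \simeq B\otimes_A I$ as left $B$-modules, saying this follows ``once one knows the square is a pullback of $A$-modules''. But the pullback only gives $J \simeq I$ as left $A$-modules; it does \emph{not} identify the $B$-module $J$ with the induced module $B\otimes_A I$. Concretely, $B\otimes_A I$ is the fibre of $B \to B\otimes_A A'$, whereas $J$ is the fibre of $B \to B'$, so $J \simeq B\otimes_A I$ as $B$-modules is equivalent to the canonical map $B\otimes_A A' \to B'$ being an equivalence. That statement is true, but it is not a consequence of the pullback alone: proving it already requires Tor-unitality (it is essentially the left--right mirror of the second assertion, together with $B'\otimes_A A'\simeq B'$). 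Without it, the reassociation $B'\otimes_B J \simeq B'\otimes_A I$ that your argument relies on is unjustified. Your closing paragraph anticipates bookkeeping issues with module structures, but the problem here is not bookkeeping: it is that an actual equivalence is being assumed without proof.

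The paper avoids this by reversing the logical order. Having established $A'\otimes_A B \xrightarrow{\sim} A'\otimes_A B'$ (your correct second step) and $A'\otimes_A B' \xrightarrow{\sim} B'$ (the counit, via Lemma~\ref{lem:Tor-unital-implies-ff}), one gets $A'\otimes_A B \simeq B'$. Then Tor-unitality of $B\to B'$ is immediate:
\[
B'\otimes_B B' \;\simeq\; (A'\otimes_A B)\otimes_B B' \;\simeq\; A'\otimes_A B' \;\simeq\; B'.
\]
No identification of $J$ as a $B$-module is needed.
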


See Remark~\ref{rem:Luries-patching} for a partial converse.

\begin{proof}
Write $I$ for the fibre of $A \to A'$. 
Since $A \to A'$ is Tor-unital, $A' \otimes_{A} I \simeq 0$. As by assumption \eqref{diag:ring-spectra} is a pullback square,  the fibre of $B \to B'$ is equivalent (as left $A$-module) to $I$, hence $A' \otimes_{A} B \to A' \otimes_{A} B'$ is an equivalence, too. 
By Lemma~\ref{lem:Tor-unital-implies-ff} the counit  $A' \otimes_{A} M \to M$ is an equivalence for every $A'$-module $M$. In particular, $A' \otimes_{A} B' \to B'$ is an equivalence. 
Summing up, the canonical map $A' \otimes_{A} B \to B'$ is an equivalence. Thus
\[
B' \otimes_{B} B' \simeq (A' \otimes_{A} B) \otimes_{B} B' \simeq A' \otimes_{A} B' \simeq B'
\]
and $B \to B'$ is Tor-unital.
\end{proof}

\begin{ex}
	\label{ex:Milnor-square-as-pullback-of-ring-spectra}
	\label{ex:classical-Milnor}
Let $A \to B$ be a morphism of discrete unital rings sending a two-sided ideal $I$ of $A$ isomorphically onto an ideal  of $B$. Then the Milnor square
\[
\xymatrix@C-0.3cm@R-0.3cm{
A \ar[r] \ar[d]  & A/I   \ar[d]   \\
B \ar[r] &  B/I
}
\]
is a pullback diagram in rings.
Since $B \to B/I$ is surjective, this diagram is also a pullback  when considered as a diagram of $E_{1}$-ring spectra.
The map $A \to A/I$ is Tor-unital if and only if $\Tor_{i}^{A}( A/I, A/I ) = 0$ for all $i > 0$.

In particular, if the discrete, not necessarily unital ring $I$ is Tor-unital in the classical sense that $\Tor^{\Z\ltimes I}_{i}(\Z,\Z) = 0$ for all $i > 0$, then Lemma~\ref{lem:stability-of-Tor-unitality} applied to the Milnor square
\[
\xymatrix@C-0.3cm@R-0.3cm{
\Z \ltimes I \ar[r] \ar[d]   & \Z  \ar[d]   \\
A \ar[r]   & A/I
}
\]
implies that $A \to A/I$ is Tor-unital for any ring $A$ containing $I$ as a two-sided ideal.
\end{ex}

\begin{ex}
	\label{ex:Nisnevich}
Assume that $A$ is a commutative, unital discrete ring, and let $f \in A$. Then $A \to A[f^{-1}]$ is Tor-unital.
Assume further  that $A \to B$ is an \'etale ring map which induces an isomorphism $A/(f) \xrightarrow{\sim} B/(f)$. Then 
the diagram 
\[
\xymatrix@C-0.3cm@R-0.3cm{
A  \ar[r] \ar[d]   & A[f^{-1}]  \ar[d]   \\
B \ar[r]   & B[f^{-1}],
}
\]
viewed as a diagram of $E_{1}$-ring spectra, is a pullback square.
Indeed, this is equivalent to the exactness of the sequence
\[
0 \to A \to A[f^{-1}] \oplus B \to B[f^{-1}] \to 0,
\]
which may be checked directly. Alternatively, one may use the Mayer--Vietoris exact sequence of \'etale cohomology groups
\[
0 \to A \to A[f^{-1}] \oplus B \to B[f^{-1}] \to H^{1}_{\mathrm{\acute{e}t}}(\Spec(A), \mathcal{O}_{\Spec(A)}),
\]
which may be deduced from \cite[Prop.~III.1.27]{Milne}, together with the vanishing of the higher \'etale cohomology of quasi-coherent sheaves on affine schemes.
\end{ex}

The following is a derived version of Milnor patching:
\begin{thm}
	\label{thm:pullback-modules-tor-unital-case}
Assume that \eqref{diag:ring-spectra} is a pullback square of  $E_{1}$-ring spectra where the morphism $A \to A'$  is  Tor-unital.
Then extension of scalars induces an equivalence
\begin{equation*}
\LMod(A) \simeq \LMod(A') \times_{\LMod(B')} \LMod(B).
\end{equation*}
\end{thm}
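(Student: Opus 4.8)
The plan is to produce a functor $\Phi\colon \LMod(A) \to \LMod(A') \times_{\LMod(B')} \LMod(B)$ via extension of scalars along $A \to A'$ and $A \to B$ (together with the canonical identification of the two pushforwards to $\LMod(B')$, which exists because \eqref{diag:ring-spectra} commutes), and then to check that $\Phi$ is an equivalence by constructing an explicit inverse and verifying unit and counit are equivalences. The natural candidate for the inverse is the functor $\Psi$ sending a compatible triple $(M', N, \text{glue})$ to the pullback (fibre product) $M' \times_{P'} N$ in spectra, where $P'$ denotes the common value in $\LMod(B')$; this pullback carries an $A$-module structure because $A \simeq A' \times_{B'} B$ as $E_1$-ring spectra. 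So the heart of the argument is the two comparison maps: for $M \in \LMod(A)$, the assembly map
\[
M \longrightarrow (A' \otimes_A M) \times_{B' \otimes_A M} (B \otimes_A M),
\]
and, in the other direction, for a compatible triple, the base-change maps $A' \otimes_A (M' \times_{P'} N) \to M'$ and $B \otimes_A (M' \times_{P'} N) \to N$.

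The key computational input is that Tor-unitality of $A \to A'$ propagates: by Lemma~\ref{lem:stability-of-Tor-unitality}, $B \to B'$ is also Tor-unital, and crucially $A' \otimes_A B \xrightarrow{\simeq} A' \otimes_A B' \xrightarrow{\simeq} B'$. The first step is therefore to record these equivalences and, more generally, to compute $A' \otimes_A M$, $B' \otimes_A M$, $B \otimes_A M$ for general $M$ using the fibre sequence $I \to A \to A'$ in $\LMod(A)$, where $A' \otimes_A I \simeq 0$. From $I \to A \to A'$ one obtains a fibre sequence $I \otimes_A M \to M \to A' \otimes_A M$, and since $A$ is the pullback $A' \times_{B'} B$ one has a fibre sequence $I \to B \to B'$ of $A$-modules as well; tensoring with $M$ over $A$ identifies the fibre of $B \otimes_A M \to B' \otimes_A M$ with $I \otimes_A M$, i.e.\ with the fibre of $M \to A' \otimes_A M$. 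Chasing these fibre sequences shows that $M \to (A' \otimes_A M) \times_{B' \otimes_A M} (B \otimes_A M)$ is an equivalence for \emph{every} $M \in \LMod(A)$: the square
\[
\xymatrix@C-0.3cm@R-0.3cm{
M \ar[r] \ar[d] & A' \otimes_A M \ar[d] \\
B \otimes_A M \ar[r] & B' \otimes_A M
}
\]
has both vertical fibres equal to $I \otimes_A M$ via a compatible identification, hence is cartesian. This gives that $\Psi \circ \Phi \simeq \id$.

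For the other composite, take a compatible triple in the fibre product category. Since $B \to B'$ is Tor-unital, Lemma~\ref{lem:Tor-unital-implies-ff} tells us $\LMod(B') \to \LMod(B)$ is fully faithful, so the $B$-module $N$ already "sees" its image $P' = B' \otimes_B N$; likewise $A' \otimes_A M' \simeq M'$ and $B' \otimes_A P' \simeq P'$. One must show $A' \otimes_A (M' \times_{P'} N) \to M'$ is an equivalence, and similarly for $B$. Using the fibre $I \otimes_A(-)$ as above: $M' \times_{P'} N$ fits in a fibre sequence whose fibre over $M'$ is $\mathrm{fib}(N \to P')$, and one identifies $A' \otimes_A \mathrm{fib}(N \to P')$ with $A' \otimes_A(I \otimes_B N) \simeq (A'\otimes_A I)\otimes_B N \simeq 0$, using $A' \otimes_A B \simeq B'$ to rewrite the middle term. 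The $B$-side is the more delicate one: here one shows $B \otimes_A(M' \times_{P'} N) \to N$ has cofibre (or fibre) $B \otimes_A \mathrm{fib}(M' \to P')$ and that $B \otimes_A M' \simeq B' \otimes_A M' \simeq M'$ because the fibre of $B \otimes_A M' \to B' \otimes_A M'$ is $I \otimes_A M' \simeq I \otimes_{A'} M' \otimes$-absorbed... — concretely, one reduces to $B \otimes_A I \simeq B'\otimes_A I$ which follows from $B \otimes_A I \simeq (A'\otimes_A B)\otimes_{A'} (A'\otimes_A I)$ once we know $I$ is already an $A'$-module up to the relevant tensor, or more simply from $\mathrm{fib}(B\otimes_A M' \to B'\otimes_A M') = I\otimes_A M'$ and $I \otimes_A M' = I\otimes_A(A'\otimes_A M')=(I\otimes_A A')\otimes_{A'}M' = 0$.

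\textbf{Main obstacle.} The genuinely delicate point is the $B$-base-change statement $B \otimes_A(M'\times_{P'}N)\simeq N$: one is tensoring a pullback with a ring over which neither leg is fully faithful, so the argument cannot be reduced to formal nonsense and must go through the precise identification of $I\otimes_A(-)$ and the equivalence $A'\otimes_A B\simeq B'$ from Lemma~\ref{lem:stability-of-Tor-unitality}. Keeping track of compatibilities — that the fibre-sequence identifications on the $\Phi$-side and the $\Psi$-side are inverse to one another, rather than merely abstractly equivalent — is where care is needed; functoriality is best obtained by working with the functors directly (extension of scalars, and the limit functor on the fibre-product category) rather than object by object, and invoking the universal property of the pullback $\LMod(A') \times_{\LMod(B')} \LMod(B)$ to assemble $\Phi$. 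The patching result of Lurie \cite[Thm.~16.2.0.2]{sag} handles the connective case by analogous means, and the only new feature here is that Tor-unitality replaces connectivity as the mechanism that makes the fibre $I \otimes_A(-)$ behave well after base change.
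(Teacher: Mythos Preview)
Your overall strategy---construct the extension-of-scalars functor $\Phi$, its right adjoint $\Psi(M',N,g)=M'\times_{P'}N$, and verify unit and counit---is exactly the paper's setup, and your argument for the unit is correct (the paper phrases it slightly differently, reducing to $P=A$ by colimit-preservation rather than running the fibre-sequence argument for every $M$, but these come to the same thing). The $A'$-component of the counit is also fine: $A'\otimes_A\mathrm{fib}(N\to P')$ vanishes because $A'\otimes_A B\simeq B'$, just as you say.

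The gap is in the $B$-component of the counit. Your claim that $B\otimes_A(M'\times_{P'}N)\to N$ has fibre $B\otimes_A\mathrm{fib}(M'\to P')$ is not correct: tensoring the fibre sequence $K\to L\to N$ with $B$ over $A$ gives a fibre sequence ending in $B\otimes_A N$, not in $N$, and the action map $B\otimes_A N\to N$ is \emph{not} an equivalence for a general $B$-module $N$ (there is no reason for $A\to B$ to be Tor-unital). So the fibre of the counit map $B\otimes_A L\to N$ sits in a fibre sequence between $B\otimes_A K$ and $\mathrm{fib}(B\otimes_A N\to N)$, and the latter term does not vanish. Your subsequent computation that $I\otimes_A M'\simeq 0$ is correct but only establishes $B\otimes_A M'\simeq P'$; it does not close this gap. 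What one can extract from your computations is that $B'\otimes_B(\epsilon_B)$ is an equivalence, i.e.\ the cofibre $C$ of $\epsilon_B$ satisfies $B'\otimes_B C\simeq 0$; but that alone does not force $C\simeq 0$.

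The paper sidesteps this entirely: once the unit is shown to be an equivalence (so $F$ is fully faithful), it suffices to prove that $G$ is \emph{conservative}, and this is checked by showing $G$ detects zero objects. Given $(M,N,g)$ with $M\times_{P'}N\simeq 0$, the fibre sequence $0\to M\oplus N\to P'$ gives $M\oplus N\simeq P'$; applying $A'\otimes_A(-)$ and using $A'\otimes_A B\simeq A'\otimes_A B'$ (so the $N$-summand already surjects) forces $A'\otimes_A M\simeq 0$, whence $M\simeq 0$ by Tor-unitality, and then $N\simeq 0$ follows. This conservativity argument is short and avoids any direct analysis of $B\otimes_A N$; I recommend you replace your counit computation with it.
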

\begin{proof}
Let $F$ be the functor $\LMod(A) \to \LMod(A') \times_{\LMod(B')} \LMod(B)$ induced by extension of scalars.
Since both $\infty$-categories are presentable and $F$ preserves colimits by Lemma~\ref{lem:stability-of-pullbacks}(iii), $F$ admits a right adjoint $G$. Explicitly, if $(M,N,g)$ is an object of $ \LMod(A') \times_{\LMod(B')} \LMod(B)$, then $G(M,N,g)$ is the pullback in left $A$-modules
\[
G(M,N,g) \simeq M \times_{B'\otimes_{B}N} N
\]
where the map $M \to B' \otimes_{B} N$ is the  composition $M \to B' \otimes_{A'} M \xrightarrow{g} B' \otimes_{B} N$.
We claim that the unit 
\[
P \to (A' \otimes_{A} P) \times_{B' \otimes_{B} ( B \otimes_{A} P )} (B \otimes_{A} P)
\]
of the adjunction is an equivalence for any $A$-module $P$. Since also  $G$  commutes with colimits, it suffices to check this for $P = A$. In that case the claim follows from the assumption that \eqref{diag:ring-spectra} is a pullback square.
Hence $F$ is fully faithful.

It now suffices to show that the right adjoint $G$ of $F$ is conservative. 
For this it is enough to show that $G$ detects zero objects. So let $(M,N,g)$ be an object of the pullback and assume that $G(M,N,g) \simeq 0$.
There is a 
fibre sequence of left $A$-modules
\[
G(M,N,g) \to M \oplus N \to B' \otimes_{B} N
\]
and hence the map
\begin{equation}
		\label{eq:MoplusN}
M \oplus N \xrightarrow{\simeq} B' \otimes_{B} N
\end{equation}
 is an equivalence. Extending scalars from $A$ to $A'$ we get an equivalence
\begin{equation}
	\label{eq:A'otimesMoplusN}
A' \otimes_{A} M \oplus A' \otimes_{A} N \xrightarrow{\simeq} A' \otimes_{A} B' \otimes_{B} N.
\end{equation}
From  Lemma \ref{lem:stability-of-Tor-unitality} we know that $A' \otimes_{A} B \to A' \otimes_{A} B'$ is an equivalence.  
Since $\LMod(B)$ is generated by $B$ under colimits and finite limits, we conclude that $A' \otimes_{A} P \to A' \otimes_{A} B' \otimes_{B} P$ is an equivalence for every left $B$-module $P$. Applying this with $P=N$, we see that the restriction of \eqref{eq:A'otimesMoplusN} to the second summand is an equivalence.
Hence $A' \otimes_{A} M \simeq 0$. Since $M$ is an $A'$-module, Lemma \ref{lem:Tor-unital-implies-ff} implies that the counit is an equivalence $A' \otimes_{A} M \simeq  M$, i.e., $M \simeq 0$. But then also $B' \otimes_{B} N \simeq B' \otimes_{A'} M \simeq 0$, and hence $N \simeq 0$ by \eqref{eq:MoplusN}.
\end{proof}

\begin{rem}
	\label{rem:Luries-patching}
Without the Tor-unitality assumption Theorem~\ref{thm:pullback-modules-tor-unital-case} does not hold, see \cite[Warning 16.2.0.3]{sag} for a counter example. 

However, if one assumes instead that 
\eqref{diag:ring-spectra} is a pullback square of connective ring spectra with $\pi_{0}(B) \to \pi_{0}(B')$ surjective, then 
\cite[Prop.\ 16.2.2.1]{sag} implies that restricting the functors $F$ and $G$ from the proof of Theorem~\ref{thm:pullback-modules-tor-unital-case} to the subcategories of connective modules  gives inverse equivalences
\begin{equation*}
\LMod(A)_{\geq 0}  \leftrightarrows \LMod(A')_{\geq 0} \times_{\LMod(B')_{\geq 0}} \LMod(B)_{\geq 0}.
\end{equation*}
One can use this to show that in this situation, Tor-unitality of $B \to B'$ implies Tor-unitality of $A \to A'$: 
Let $I$ be the fibre of $B \to B'$. Since $\pi_{0}(B) \to \pi_{0}(B')$ is surjective, $I$ is connective. Since $B \to B'$ is Tor-unital,  $B' \otimes_{B} I \simeq 0$.
Hence we may view $(0, I, 0)$ as an object of the pullback $\LMod(A')_{\geq 0} \times_{\LMod(B')_{\geq 0}} \LMod(B)_{\geq 0}$.
The functor $G$ sends $(0,I,0)$ to the $A$-module $0 \times_{0} I \simeq I$. By the above the counit $F(I) \simeq F(G(0,I,0))\to (0,I,0)$ is an equivalence. Looking at the first component we deduce that $A' \otimes_{A} I \to 0$ is an equivalence, i.e., $A \to A'$ is Tor-unital.
\end{rem}

\begin{thm}
	\label{thm:excisive-square-ring-spectra}
Assume that \eqref{diag:ring-spectra} is a pullback square of  $E_{1}$-ring spectra where the morphism $A \to A'$  is  Tor-unital. Then the square 
\begin{equation}
	\label{diag:excisive-square-perf}
	\begin{split}
	\xymatrix@C-0.3cm@R-0.3cm{
	\Perf(A) \ar[r] \ar[d] & \Perf(B) \ar[d]  \\
	\Perf(A') \ar[r] & \Perf(B')
	}
	\end{split}
\end{equation}
is excisive. In particular, if $E \colon \Cat^{\exact}_{\infty} \to T$ is a weakly localizing invariant, then the induced square
\begin{equation*}
	\label{diag:E-excisive-square-perf}
	\begin{split}
	\xymatrix@C-0.3cm@R-0.3cm{
	E(\Perf(A)) \ar[r] \ar[d] & E(\Perf(B)) \ar[d]  \\
	E(\Perf(A')) \ar[r] & E(\Perf(B'))
	}
	\end{split}
\end{equation*}
in $T$ is cartesian.
\end{thm}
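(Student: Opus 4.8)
The plan is to verify the two defining conditions of an excisive square (Definition~\ref{dfn:excisive-square}) for the square \eqref{diag:excisive-square-perf}, and then invoke Theorem~\ref{thm:excisive-square}. First I would pass from perfect modules to all modules via the Ind-construction: since $\LMod(A)$ is compactly generated with compact objects exactly $\Perf(A)$ (and likewise for $A'$, $B$, $B'$), we have $\Ind(\Perf(A)) \simeq \LMod(A)$, and similarly for the other three corners. Moreover extension of scalars $\Perf(A) \to \Perf(A')$ is the restriction to compact objects of the colimit-preserving functor $A' \otimes_A (-) \colon \LMod(A) \to \LMod(A')$, and the latter is the Ind-extension of the former. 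Hence the induced square \eqref{diag:ind-excisive-square} for \eqref{diag:excisive-square-perf} is identified with the square of module categories and extension-of-scalars functors.

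The first condition — that this square of module categories is a pullback in $\Pr^{\mathrm{L}}_{\st}$ — is precisely the derived Milnor patching statement Theorem~\ref{thm:pullback-modules-tor-unital-case}, which applies because $A \to A'$ is Tor-unital by hypothesis. One only needs to note that the equivalence there is compatible with the relevant colimit-preserving structure, so it is an equivalence in $\Pr^{\mathrm{L}}_{\st}$, not merely of underlying $\infty$-categories; this follows from Lemma~\ref{lem:stability-of-pullbacks}(iii). The second condition — that $\LMod(B) \to \LMod(B')$ is a localization, i.e.\ has a fully faithful right adjoint — follows from Lemma~\ref{lem:Tor-unital-implies-ff} together with the fact, established in Lemma~\ref{lem:stability-of-Tor-unitality}, that Tor-unitality of $A \to A'$ forces Tor-unitality of $B \to B'$. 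Indeed, by Lemma~\ref{lem:Tor-unital-implies-ff} the forgetful functor $\LMod(B') \to \LMod(B)$ is then fully faithful, and it is the right adjoint of $B' \otimes_B (-)$ by \cite[Prop.~4.6.2.17]{halg}.

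Having checked both conditions, \eqref{diag:excisive-square-perf} is an excisive square, and the ``in particular'' assertion is immediate from Theorem~\ref{thm:excisive-square}: a weakly localizing invariant $E$ sends an excisive square to a cartesian square in $T$. So the proof is essentially a matter of assembling Theorems~\ref{thm:pullback-modules-tor-unital-case} and~\ref{thm:excisive-square} with Lemmas~\ref{lem:Tor-unital-implies-ff} and~\ref{lem:stability-of-Tor-unitality}, the only genuine content being the bookkeeping that the Ind-completed square really is the module-category square.

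I expect the main (minor) obstacle to be the identification in the first paragraph: checking carefully that $\Ind$ applied to the square \eqref{diag:excisive-square-perf} of perfect-module categories and extension-of-scalars functors reproduces, up to coherent equivalence, the square of module categories and base-change functors to which Theorem~\ref{thm:pullback-modules-tor-unital-case} applies. This uses that $A' \otimes_A (-)$ preserves compact objects (it sends $A$ to $A'$ and is exact and colimit-preserving, hence preserves $\Perf$), that it is determined by its restriction to $\Perf(A)$ via the universal property of $\Ind$ \cite[Prop.~5.3.5.10]{htt}, and that the unit and counit of the $(A' \otimes_A -, \text{forget})$ adjunction are compatible with these identifications. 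None of this is deep, but it is the step where one must be attentive; everything else is a direct citation.
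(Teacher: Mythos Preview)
Your proposal is correct and follows exactly the same route as the paper's proof: identify the Ind-completion of \eqref{diag:excisive-square-perf} with the square of module categories, invoke Theorem~\ref{thm:pullback-modules-tor-unital-case} for the pullback condition, use Lemma~\ref{lem:stability-of-Tor-unitality} together with Lemma~\ref{lem:Tor-unital-implies-ff} for the localization condition, and then apply Theorem~\ref{thm:excisive-square}. The paper's version is terser and simply asserts the $\Ind$ identification without the bookkeeping you outline, but there is no substantive difference.
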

\begin{proof}
Applying $\Ind$ to  diagram \eqref{diag:excisive-square-perf} yields the diagram
\begin{equation*}
	\label{diag:excisive-square-mod}
	\begin{split}
	\xymatrix@C-0.3cm@R-0.3cm{
	\LMod(A) \ar[r] \ar[d] & \LMod(B) \ar[d]  \\
	\LMod(A') \ar[r] & \LMod(B')
	}
	\end{split}
\end{equation*}
This is a pullback diagram by Theorem~\ref{thm:pullback-modules-tor-unital-case}.
As $A \to A'$ is Tor-unital, so is $B \to B'$ by Lemma~\ref{lem:stability-of-Tor-unitality}. 
Hence the right adjoint of $\LMod(B) \to \LMod(B')$, which is the forgetful functor, is fully faithful by Lemma~\ref{lem:Tor-unital-implies-ff}. So the square~\eqref{diag:excisive-square-perf} is excisive. Now the second assertion follows by applying Theorem~\ref{thm:excisive-square}.
\end{proof}

\begin{proof}[Proof of Theorems~\ref{thm:thm1} and \ref{thm:thm2}]
If we apply Theorem~\ref{thm:excisive-square-ring-spectra} with $E=K$, we immediately get Theorem~\ref{thm:thm2}.

Now let $I$ be a ring which is Tor-unital in the classical sense, and let $A$ be any unital ring containing $I$ as a two-sided ideal.
Then the Milnor square
\[
\xymatrix@C-0.3cm@R-0.3cm{
\Z \ltimes I \ar[r] \ar[d] & \Z \ar[d]  \\
A \ar[r] & A/I,
}
\]
viewed as square of $E_{1}$-ring spectra, is a pullback square (see Example~\ref{ex:Milnor-square-as-pullback-of-ring-spectra}).
By assumption, the top horizontal map is Tor-unital in our sense. 
Hence we may apply Theorem~\ref{thm:thm2} to deduce that the map on relative $K$-groups $K_{*}(I) = K_{*}(\Z \ltimes I, I) \to K_{*}(A, I)$ is an isomorphism.
\end{proof}

\providecommand{\bysame}{\leavevmode\hbox to3em{\hrulefill}\thinspace}


\begin{thebibliography}{BGT13}

\bibitem[AHW17]{AHW}
	A.~Asok, M.~Hoyois, and M.~Wendt, \emph{Affine representability results in $\mathbb{A}^{1}$-ho\-mo\-to\-py theory, I: vector bundles},
	Duke Math. J. 166 (2017), no.~10, 1923--1953. 

\bibitem[BGT13]{Blumberg-Tabuada-Gepner}
A.~Blumberg, D.~Gepner, and G.~Tabuada, \emph{A universal characterization of
  higher algebraic {$K$}-theory}, Geom. Topol. \textbf{17} (2013), no.~2,
  733--838.

\bibitem[BM12]{Blumberg-Mandell}
A.~Blumberg and M.~Mandell, \emph{Localization theorems in topological
  {H}ochschild homology and topological cyclic homology}, Geom. Topol.
  \textbf{16} (2012), no.~2, 1053--1120.
  
\bibitem[Cor06]{Cortinas-Obstruction}
G.~Corti\~nas, \emph{The obstruction to excision in {$K$}-theory and in
  cyclic homology}, Invent. Math. \textbf{164} (2006), no.~1, 143--173.
  
  \bibitem[DK08]{Dundas-Kittang-1}
B.~Dundas and H.~Kittang, \emph{Excision for {$K$}-theory
  of connective ring spectra}, Homology Homotopy Appl. \textbf{10} (2008),
  no.~1, 29--39. 

\bibitem[DK13]{Dundas-Kittang-2}
\bysame, \emph{Integral excision for {$K$}-theory}, Homology Homotopy Appl.
  \textbf{15} (2013), no.~1, 1--25. 

\bibitem[GH06]{GeisserHesselholt}
T.~Geisser and L.~Hesselholt, \emph{Bi-relative algebraic {$K$}-theory and
  topological cyclic homology}, Invent. Math. \textbf{166} (2006), no.~2,
  359--395.

\bibitem[Joy08]{Joyal}
A.~Joyal, \emph{The theory of quasi-categories and its applications}, Quadern
  45, Vol. II, Centre de Recerca Matematica Barcelona., 2008.



\bibitem[Kel94]{Keller}
B.~Keller, \emph{A remark on the generalized smashing conjecture},
  Manuscripta Math. \textbf{84} (1994), no.~2, 193--198.


\bibitem[Lur09]{htt}
J.~Lurie, \emph{Higher topos theory}, Annals of Mathematics Studies, vol. 170,
  Princeton University Press, Princeton, NJ, 2009.

\bibitem[Lur17a]{halg}
\bysame, \emph{Higher algebra} (September 18, 2017),
  available at 
  \url{http://www.math.harvard.edu/~lurie/}, 
  2017.

\bibitem[Lur17b]{sag}
\bysame, \emph{Spectral algebraic geometry} (June 20, 2017), available at
   \url{http://www.math.harvard.edu/~lurie/},
  2017.


\bibitem[Mil80]{Milne}
J.~Milne, \emph{\'Etale cohomology},
	Princeton Mathematical Series, 33. Princeton University Press, Princeton, N.J., 1980.


\bibitem[Mor14]{Morrow}
M.~Morrow, \emph{Pro unitality and pro excision in algebraic K-theory and cyclic homology},
  \href{https://arxiv.org/abs/1404.4179}{arXiv:1404.4179}, to appear in J. Reine Angew. Math.

\bibitem[Nee92]{Neeman-connection}
A.~Neeman, \emph{The connection between the {$K$}-theory localization theorem
  of {T}homason, {T}robaugh and {Y}ao and the smashing subcategories of
  {B}ousfield and {R}avenel}, Ann. Sci. \'Ecole Norm. Sup. (4) \textbf{25}
  (1992), no.~5, 547--566.


\bibitem[NR04]{Neeman-Ranicki}
A.~Neeman and A.~Ranicki, \emph{Noncommutative localisation in algebraic
  {$K$}-theory. {I}}, Geom. Topol. \textbf{8} (2004), 1385--1425. 
  
\bibitem[Sch11]{Schlichting}
M.~Schlichting, \emph{Higher algebraic {$K$}-theory}, Topics in algebraic
  and topological {$K$}-theory, Lecture Notes in Math., vol. 2008, Springer,
  Berlin, 2011, pp.~167--241. 


\bibitem[Sus95]{Suslin}
A.~Suslin, \emph{Excision in the integral algebraic {$K$}-theory}, Trudy Mat.
  Inst. Steklov. \textbf{208} (1995), no.~Teor. Chisel, Algebra i Algebr.
  Geom., 290--317.

\bibitem[SW92]{SuslinWod}
A.~Suslin and M.~Wodzicki, \emph{Excision in algebraic {$K$}-theory}, Ann. of
  Math. (2) \textbf{136} (1992), no.~1, 51--122.
  
  \bibitem[TT90]{thomason}
  R.~Thomason and T.~Trobaugh, \emph{Higher algebraic K-theory of schemes and of derived categories}, The Grothendieck Festschrift, Vol.~III, 247--435, 
Progr.~Math., 88, Birkh\"auser Boston, Boston, MA, 1990. 

\bibitem[Wod89]{Wodzicki}
M.~Wodzicki, \emph{Excision in cyclic homology and in rational algebraic
  {$K$}-theory}, Ann. of Math. (2) \textbf{129} (1989), no.~3, 591--639.

\end{thebibliography}
\end{document}